\tikzset{mynode/.style={inner sep=2pt,fill,outer sep=0,circle}}
\newtheorem{theorem}{Theorem}[section]
\newtheorem{lemma}{Lemma}[section] 
\newtheorem{corollary}{Corollary}[section] 
\newtheorem{proposition}{Proposition}[section] 
\newtheorem{remark}{Remark}[section]  
\numberwithin{equation}{section}
\newsavebox\foobox
\newlength{\foodim}
\numberwithin{equation}{section}
\numberwithin{figure}{section}
\DeclareRobustCommand{\rchi}{{\mathpalette\irchi\relax}}
\newcommand{\irchi}[2]{\raisebox{\depth}{$#1\chi$}}
\newcommand\obullet[1]{\ThisStyle{\ensurestackMath{%
  \stackon[1pt]{\SavedStyle#1}{\SavedStyle\kern.6\LMpt\bullet}}}}
\newcommand\ocirc[1]{\ThisStyle{\ensurestackMath{%
  \stackon[1pt]{\SavedStyle#1}{\SavedStyle\kern.6\LMpt\circ}}}}
\title{ 

Dispersive Effective Metasurface Model for Bubbly Media}
\author{Arpan Mukherjee\footnote{Joint Research Center of Applied Mathematics, Shenzhen MSU-BIT University, Shenzhen, People's Republic of China (arpanmath99@alumni.iitm.ac.in).} \ and Mourad Sini\footnote{Radon Institute (RICAM), Austrian Academy of Sciences, Altenbergerstrasse 69, A-4040, Linz, Austria (mourad.sini@oeaw.ac.at). This author is partially supported by the Austrian Science Fund (FWF): P36942.}} 
\begin{document}

\maketitle
\begin{abstract}   

We derive the effective transmission condition for a cluster of acoustic subwavelength resonators, modeled as small-scaled bubbles distributed not necessarily periodically along a smooth, bounded hypersurface, which need not be flat. The transmission condition specifies that the jump in the normal derivative of the acoustic field is proportional to its second time derivative, convoluted in time with a sinusoidal kernel. This kernel has a period determined by the common subwavelength resonance (specifically, the Minnaert resonance in this case).
\vspace{0.1cm}

\noindent
This dispersive transmission condition can also be interpreted as a Dirac-like surface potential that is convoluted in the time domain and spatially supported on the specified hypersurface. We highlight the following features:

\begin{enumerate}
      \item \textit{High resonance regime}: When the common resonance is large, the surface behaves as fully transparent, permitting complete transmission of the acoustic field.

      \item \textit{Moderate resonance regime}: For moderate resonance values, the surface acts as a screen with memory effects, capturing the dispersive behavior induced by the resonance.

      \item \textit{Low resonance regime}: When the common resonance is small, the surface functions as a partial reflective (or partial transmissive) screen with no memory effect.
\end{enumerate}

\end{abstract}    


   \section{Introduction}    

      \subsection{Motivation}    

\noindent
The study of acoustic subwavelength resonators has attracted significant attention due to their remarkable ability to manipulate sound waves at scales much smaller than the wavelength. Among these, bubble-based resonators represent a particularly compelling example, as their subwavelength resonances—such as the Minnaert resonance—enable unique acoustic phenomena, \cite{Commander-Prosperetti, Leroy-eta-l}. In many applications, these resonators are distributed on smooth, bounded hypersurfaces, which may be curved or non-flat, adding complexity to the resulting wave interactions. Understanding how such distributions affect acoustic transmission is critical for applications in acoustic Metamaterials, sound insulation, and wave control. Most of published works assume the surface to be flat and the resonators are distributed periodically, \cite{Ammari-et-al-0, Ammari-et-al-2, Maurel-1, Maurel-2} for instance.
In this work, we characterize the effective transmission condition for a cluster of such resonators, modeled as small-scale bubbles distributed along a non-flat smooth and bounded surface. This condition describes how the acoustic field interacts with a hypersurface populated by the resonators. Specifically, the transmission condition relates the jump in the normal derivative of the acoustic field across the hypersurface to its second time derivative, convoluted in time with a sinusoidal kernel. The period of this kernel is governed by the common subwavelength resonance of the bubbles, with the Minnaert resonance serving as a key example.
Importantly, this dispersive transmission condition can also be interpreted through a physical analogy: it behaves as a Dirac-like surface potential that is convoluted in the time domain and supported spatially on the hypersurface. This dual perspective provides both mathematical clarity and physical intuition for understanding the influence of resonator clusters on sound propagation.
To explore the effects of the common resonance, we identify three distinct regimes of acoustic behavior:
\begin{enumerate}
      \item	\textit{High resonance regime}: When the common resonance is large, the hypersurface behaves as a fully transparent medium, allowing complete transmission of the acoustic field.
      \item \textit{Moderate resonance regime}: In this intermediate case, the hypersurface exhibits memory effects, reflecting the dispersive characteristics induced by the resonance. These memory effects introduce complex time-dependent interactions, which are of particular interest for applications requiring precise wave control.
      \item \textit{Low resonance regime}: When the common resonance is small, the hypersurface acts as a conventional reflective or transmissive screen, akin to traditional acoustic barriers.
\end{enumerate}
\noindent
The findings presented here highlight the rich interplay between resonance, dispersion, and surface geometry in shaping acoustic wave behavior. By developing a deeper understanding of these effects, this study lays the foundation for advancements in the design of novel acoustic devices, ranging from highly transparent surfaces to tunable acoustic screens with memory-dependent properties.


       \subsection{Problem setting and the mathematical model}  

\noindent       
Consider \( D \) to be a \( \mathcal{C}^2 \)-regular domain in \( \mathbb{R}^3 \) with a connected complement \( \mathbb{R}^3 \setminus \overline{D} \). Let \( \kappa_b \) and \( \rho_b \) represent the bulk modulus and density of the gas within each acoustic bubble, respectively. Additionally, let \( \kappa_c \) and \( \rho_c \) be real constants, with \( \kappa_c, \rho_c \in \mathbb{R}^+ \), describing the fixed properties of the homogeneous background medium \( \mathbb{R}^3 \setminus \overline{D} \). We define
\[
\kappa := \kappa_b \chi(D) + \kappa_c \chi(\mathbb{R}^3 \setminus \overline{D}) \quad \text{and} \quad \rho := \rho_b \chi(D) + \rho_c \chi(\mathbb{R}^3 \setminus \overline{D}),
\]
where \( \chi \) denotes the characteristic function of a domain. The homogeneous mathematical model describing the propagation of the total wave field \( u := u^{\text{sc}} + u^{\text{in}} \) is then formulated by the following transmission problem governed by the hyperbolic equation:
    \begin{align}\label{hyperbolic-problem}
        \begin{cases} 
            \kappa^{-1}(x) u_{tt} - \nabla \cdot \left( \rho^{-1}(x) \nabla u \right) = 0 & \text{in } (\mathbb{R}^{3} \setminus \partial D) \times (0, T), \\
            u \big|_{+} = u \big|_{-} & \text{on } \partial D, \\
            \rho_c^{-1} \partial_\nu u \big|_{+} = \rho_b^{-1} \partial_\nu u \big|_{-} & \text{on } \partial D, \\
            u(x, 0) = u_t(x, 0) = 0 & \text{for } x \in \mathbb{R}^3,
        \end{cases}
    \end{align}
where \( u^{\textit{in}} \) denotes the incident wave field generated by a point source located at \( x_0 \in \mathbb{R}^3 \setminus \overline{D} \), given by
    \begin{align}
          u^{\textit{in}}(x, t, x_0) := \rho_c\frac{\lambda\left(t - c_0^{-1} \vert x - x_0 \vert\right)}{\vert x - x_0 \vert},
    \end{align}
with \( \lambda \in \mathcal{C}^9(\mathbb{R}) \)\footnote{The reason behind taking this time-regularity of $\lambda$ was discussed in \cite{Arpan-Sini-SIMA}.} assumed to be a modulated causal temporal signal. Observe that $u^\textit{in}$ satisfies $k_c^{-1}u^\textit{in}_{tt}-\rho_c^{-1}\Delta u^\textit{in} = \lambda(t)\delta_{x_0}(x).$ The term \( u^{\textit{sc}} \) represents the scattered wave field.
The acoustic problem above is related to the linearized version of the nonlinear model for acoustic propagation 
 in bubbly media, see \cite{C-M-P-T-1, C-M-P-T-2}.
\par\noindent
The objective of this work is to derive an effective medium theory for the problem outlined above. We consider a cluster of bubbles, represented by
\[
D := \bigcup_{m=1}^M \bigcup_{l=1}^{\lfloor K(\cdot) + 1 \rfloor} D_{m_l},
\]
where \(\lfloor  K(\cdot) + 1 \rfloor\) denotes the floor function for a given non-negative, real-valued continuous function $K$. Each bubble \( D_{m_l} \) is defined as \( D_{m_l} = z_{m_l} + \varepsilon B_{m_l} \) with \( \varepsilon \ll 1 \), where \( B_{m_l} \) is centered at the origin and has a volume \(\text{Vol}(B_{m_l}) \sim 1\). Here, \( z_{m_l} \) represents the positions of the bubbles. The term \(\lfloor K(\cdot) + 1 \rfloor\) will be discussed shortly with further details.\\
\noindent
We now introduce the following two parameters
\[
c_0 := \sqrt{\frac{\rho_c}{\kappa_c}} \quad \text{and} \quad c_b := \sqrt{\frac{\rho_b}{\kappa_b}},
\]
which denote the wave speeds inside each bubble and within the background medium, respectively. Both parameters are assumed to be of order 1, implying that the contrast between wave speeds inside and outside the bubbles is not significant. However, a significant contrast in density and bulk modulus is assumed, leading to the following scaling properties for these two parameters:
    \begin{align}
          \kappa_b = \varepsilon^2 \overline{\kappa}_b \quad \text{and} \quad \rho_b = \varepsilon^2 \overline{\rho}_b, \quad \varepsilon \ll 1,
    \end{align}
where \(\overline{\kappa}_b\) and \(\rho_b\) are pre-factors independent of \(\varepsilon\). We further assume that the parameters associated with the homogeneous medium, \(\kappa_c\) and \(\rho_c\), are of order 1. 
\bigskip

\noindent
\textbf{\textit{Definitions.}} Let \( D \) denote the union of bubbles, expressed as \( D := \bigcup\limits_{m=1}^M \bigcup\limits_{l=1}^{\lfloor K(\cdot) + 1 \rfloor} D_{m_l} \). We define the following parameters:
\begin{enumerate}
    \item The parameter \(\varepsilon\) as the maximum diameter among all distributed bubbles, given by
   \[
   \varepsilon := \max\limits_{\substack{1 \leq m \leq M \\ 1 \leq l \leq \lfloor K(\cdot) + 1 \rfloor}} \textit{diam}(D_{m_l}).
   \]
   \item The parameter \(d\) as the minimum distance between any two distributed bubbles, defined as
   \[
   d := \min\limits_{1 \leq m, n \leq M} \min\limits_{\substack{1 \leq l, j \leq \lfloor K(\cdot) + 1 \rfloor \\ m_l \ne n_j}} \textit{dist}(D_{m_l}, D_{n_j}).
   \]
\end{enumerate}

\noindent \textbf{\textit{Assumption 1.}}\label{as2} Let \( \mathbf{\Gamma} \) denote a \( \mathcal{C}^2 \)-regular surface with unit area, such that either:  
\begin{enumerate}
    \item \( \mathbf{\Gamma} = \partial \Omega, \) or  
    \item \( \mathbf{\Gamma} \) is an open subset of \( \partial \Omega \), where \( \partial \Omega \) is the boundary of a \( \mathcal{C}^2 \)-regular, open, connected, and bounded subset $\Omega$ of \( \mathbb{R}^3 \).
\end{enumerate} 
The orientation of the normal on $\bm \Gamma$ inherits the exterior orientation of the one on $\partial \Omega.$
\vspace{0.05cm}

\noindent
Let \( K : \mathbf{\Gamma} \to \mathbb{R} \) be a given non-negative, continuous function. The domain \( \mathbf{\Gamma} \) is partitioned into \( M \) subdomains \( \Gamma_m \), each with an area of precisely \( \textit{Area}(\Gamma_m) = d^2 \). For a fixed point \( z_{m_1} \), each subdomain \( \Gamma_m \) is further subdivided into \( \lfloor K(z_{m_1}) + 1 \rfloor \) smaller subdomains \( \Gamma_{m_l} \), where \( z_{m_1} \in D_{m_l} \cap \Gamma_{m_l} \subseteq \Gamma_m \) for \( l = 1, 2, \ldots, \lfloor K(z_{m_1}) + 1 \rfloor \). Consequently, the number of gas bubbles within each subdomain \( \Gamma_m \) is determined to be \( \lfloor K(z_{m_1}) + 1 \rfloor \) and remains fixed.
\bigskip

\noindent
As the function $K(\cdot)$ is continuous then the induced function \( \lfloor K(\cdot) + 1 \rfloor \) is measurable and bounded, i.e. it belongs to $L^\infty(\Gamma)$, since $\lfloor K(\cdot) + 1 \rfloor^{-1}(n)=K^{-1}(n-1 \leq t<n)$, for any integer $n$, which is a measurable set since $K(\cdot)$ is continuous. The property $ \lfloor K(\cdot) + 1 \rfloor \in L^\infty(\Gamma)$ will be needed. \footnote{It is enough to have a pointwisely well defined function $K$, on $\bf \Gamma$, which is measurable.}

\noindent

\bigskip

\noindent
In this paper, we focus on the following regimes to model the cluster distributed in a two-dimensional bounded smooth surface
\begin{align} \label{regimes}
    M \sim d^{-2} \quad \text{and} \quad d \sim \varepsilon^{\frac{1}{2}}, \quad \varepsilon \ll 1.
\end{align}


    \subsection{Function Spaces.}       

\noindent
For the mathematical analysis, we first recall the functional decomposition (see \cite{D-L, raveski} for further details):
\begin{align} \label{decomposition-introduction}
\big(\mathrm{L}^2(D)\big)^3 = \mathbb{H}_{0}(\textit{div}\, 0, D) \oplus \mathbb{H}_{0}(\textit{curl}\, 0, D) \oplus \nabla \mathbb{H}_{\textit{arm}},
\end{align}
where
\begin{align}
    \begin{cases}
        \mathbb{H}_{0}(\textit{div}\, 0, D) = \Big\{ u \in \mathbb{H}(\textit{div}, D): \textit{div}\; u = 0 \; \text{in}\; D, \; u \cdot \nu = 0 \; \text{on}\; \partial D \Big\}, \\
        \mathbb{H}_{0}(\textit{curl}\, 0, D) = \Big\{ u \in \mathbb{H}(\textit{curl}, D): \textit{curl}\; u = 0 \; \text{in}\; D, \; u \times \nu = 0 \; \text{on}\; \partial D \Big\}, \\
        \nabla \mathbb{H}_{\textit{arm}} = \Big\{ u \in \big(\mathbb{L}^2(D)\big)^3: \exists \, \varphi \ \text{s.t.}\; u = \nabla \varphi, \; \varphi \in \mathbb{H}^1(D), \; \Delta \varphi = 0 \Big\}.
    \end{cases}
\end{align}
The Magnetization operator is defined by
\[
\bm{\mathbb{M}}^{(0)}_{\mathrm{B}_{m_l}}[f](x) := \nabla \int_{\mathrm{B}_{m_l}} \nabla_{y} \frac{1}{4 \pi | x - y |} \cdot f(y) \, dy.
\]
It is well known (see, for instance, \cite{friedmanI}) that the Magnetization operator \(\mathbb{M}^{(0)}_{\mathrm{B}_{m_l}}: \nabla \mathbb{H}_{\textit{arm}} \rightarrow \nabla \mathbb{H}_{\textit{arm}}\) induces a complete orthonormal basis, denoted \(\big(\lambda^{(3)}_{\mathrm{n}_{m_l}}, \mathrm{e}^{(3)}_{\mathrm{n}_{m_l}}\big)_{\mathrm{n} \in \mathbb{N}}\). We set \(\lambda_1^{(3)} := \min_{m_l} \max_n \lambda_{n_{m_l}}^{(3)}\).
We also define the real-valued Sobolev space \(\mathrm{H}_0^\mathrm{r}(0, \mathrm{T})\) for \(\mathrm{r} \in \mathbb{R}\) and \(\mathrm{T} \in (0, \infty]\) as follows:
\[
\mathrm{H}_0^\mathrm{r}(0, \mathrm{T}) := \Big\{ \mathrm{g}|_{(0, \mathrm{T})}: \mathrm{g} \in \mathrm{H}^\mathrm{r}(\mathbb{R}) \; \text{and} \; \mathrm{g}_{(-\infty, 0)} \equiv 0 \Big\}.
\]
A similar concept applies for functions that take values in a Hilbert space \(\mathrm{X}\), denoted \(\mathrm{H}^\mathrm{r}_0(0, \mathrm{T}; \mathrm{X})\). For \(\sigma > 0\) and \(\mathrm{r} \in \mathbb{Z}_+\), we define
    \begin{align}
          \mathrm{H}^\mathrm{r}_{0, \sigma}(0, \mathrm{T}; \mathrm{X}) := \Big\{ \mathrm{f} \in \mathrm{H}^\mathrm{r}_0(0, \mathrm{T}; \mathrm{X}): \sum_{\mathrm{n}=0}^\mathrm{r} \int_0^\mathrm{T} e^{-2\sigma \mathrm{t}} \| \partial_\mathrm{t}^\mathrm{n} \mathrm{f}(\cdot, \mathrm{t}) \|_\mathrm{X} \, d\mathrm{t} < \infty \Big\}.
    \end{align}
Before presenting the main result of this work, we introduce the parameter \( \mathbf{C}_{j_i} := \overline{\mathbf{C}}_{j_i} \varepsilon \), where \( \overline{\mathbf{C}}_{j_i} := \text{vol}(B_{j_i}) \frac{\rho_c}{\overline{\kappa}_{\mathrm{b}_{j_i}}} \).  

\noindent
We assume that the shape of each bubble \( D_{m_l} \) (for \( m = 1, 2, \ldots, M \)) is identical. Consequently, this implies that \( \mathbf{C}_{m_l} = \mathbf{C}_{j_i} \) for \( j = 1, 2, \ldots, M \) and \( i,l= 1, 2, \ldots, \lfloor K(\cdot) + 1 \rfloor \). Furthermore, let us define \( \overline{\mathbf{C}} := \overline{\mathbf{C}}_{j_i} \) as the scaled value of \( \mathbf{C}_{j_i} \).  


    \subsection{Statement of the result}    
    
\begin{theorem}\label{mainth}    

Let \( \bm{\Gamma} \) and the acoustic bubbles satisfy  Assumption \textcolor{blue}{1}. Then, under the following additional condition with \(K_\text{max} := \sup_{z_{m_l}}\big(K(z_{m_l}) + 1\big)\),
    \begin{equation}\label{condition}
          \sqrt{K_\text{max}} \sum_{\substack{j=1 \\ j \neq m}}^M \sum_{\substack{i,l=1 \\ i \neq l}}^{\lfloor K(\cdot) + 1 \rfloor} \frac{\mathbf{C}_{j_i}}{4\pi |z_{m_l} - z_{j_i}|} < \omega_M^2,
    \end{equation}
we have the following asymptotic expansion for \((x,t) \in \big(\mathbb{R}^3 \setminus \overline{\mathbf{\Omega}}\big) \times (0,T)\):
    \begin{align}
          u^\textit{sc}(x,t) - \mathbcal{W}^\textit{sc}(x,t) = \mathcal{O}(\varepsilon^\frac{1}{2}) \; \text{as} \; \varepsilon \to 0,
    \end{align}
where \(\mathbcal{W}(x,t) = \mathbcal{W}^\textit{sc} + u^\textbf{in}\) is the solution of the following dispersive acoustic model:
    \begin{equation} \label{pde-intro}
        \begin{cases}
            \big(c_0^{-2} \partial_t^2 - \Delta\big)\mathbcal{W}= \lambda(t)\delta_{x_0}(x), & \text{in}\ \mathbb{R}^3 \setminus \mathbf{\Gamma} \times (0,T), \\
             \mathbf{\big[} \mathbcal{W}\mathbf{\big]} = 0, & \text{on}\ \mathbf{\Gamma}\times(0,T), \\

             \mathbf{\Big[} \partial_\nu \mathbcal{W}\mathbf{\Big]} - \displaystyle \overline{\bm{C}}\ \big(\lfloor K(\cdot) + 1 \rfloor\big)\omega_M^{-1}\int_0^t \sin\big(\omega_M^{-1}(t-\tau)\big)\, \partial_\tau^2\mathbcal{W}(x,\tau)\, d\tau=0 & \text{on}\ \mathbf{\Gamma}\times(0,T), \\
             \partial_t^i \mathbcal{W}(x,0) = 0, & \text{in}\ \mathbb{R}^3,\; i = 0,1.
        \end{cases}
    \end{equation}
    
\end{theorem}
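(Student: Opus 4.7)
The approach combines (i) a time-domain Lippmann--Schwinger representation of $u^\textit{sc}$, (ii) a Minnaert resonance analysis inside each bubble based on the magnetization basis from (\ref{decomposition-introduction}), (iii) the closure of a Foldy-type convolutional algebraic system for the bubble internal averages $\{U_{m_l}\}$, and (iv) the identification of its continuum limit with the single-layer reformulation of the transmission problem (\ref{pde-intro}). The overall strategy parallels the framework developed by the authors in their earlier work; the geometric handling of the curved surface $\bm\Gamma$ together with the density function $\lfloor K+1\rfloor$ is the main new input.

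First, I would express $u^\textit{sc}$ through the retarded Green's function $G_c(x,y,t)=\delta(t-c_0^{-1}|x-y|)/(4\pi|x-y|)$ of the background and the contrast sources associated to (\ref{hyperbolic-problem}). Under the scalings $\kappa_b=\varepsilon^2\bar\kappa_b$, $\rho_b=\varepsilon^2\bar\rho_b$ with bubble diameter $\varepsilon$, the bulk-modulus contrast term is the dominant one (Minnaert regime) and each bubble is effectively described by the scalar average $U_{m_l}(t):=|D_{m_l}|^{-1}\int_{D_{m_l}} u(y,t)\,dy$. Projecting the interior equation onto the first eigenspace of $\bm{\mathbb M}^{(0)}_{B_{m_l}}$ on $\nabla\mathbb H_\textit{arm}$ isolates the resonant mode, whose eigenvalue $\lambda_1^{(3)}$ together with $\bar\kappa_b$ and $\rho_c$ yields the frequency encoded in $\omega_M$; the reduced second-order ODE has the Duhamel solution
\begin{equation*}
U_{m_l}(t)=\omega_M^{-1}\!\int_0^t\!\sin\!\big(\omega_M^{-1}(t-\tau)\big)\,\mathcal F_{m_l}(\tau)\,d\tau,
\end{equation*}
which is precisely the origin of the sinusoidal memory kernel in (\ref{pde-intro}). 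The forcing $\mathcal F_{m_l}(t)$ collects the value of $u^\textit{in}$ at $z_{m_l}$ together with the scattered contributions radiated by the other bubbles.

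Substituting this Duhamel expression back into the coupling terms yields a linear convolutional system for $\{U_{m_l}\}$ whose off-diagonal interaction kernel is $\bm C_{j_i}/(4\pi|z_{m_l}-z_{j_i}|)$. Inverting this system uniformly in $\varepsilon$ is the main technical obstacle: hypothesis (\ref{condition}) is what guarantees invertibility via a Neumann-series argument in a Sobolev-in-time-valued $\ell^2$ space indexed by the bubbles, the factor $\sqrt{K_\textit{max}}$ arising from the Schur-type passage between an intra-cell $\ell^\infty$ bound and the global $\ell^2$ bound on the interaction matrix. Once the system is inverted, feeding the approximate $U_{m_l}$'s back into the integral representation produces a discrete single-layer potential with densities concentrated at the points $z_{m_l}$.

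Finally, in the regime (\ref{regimes}), the sum $\sum_{m,l}\bm C_{m_l}f(z_{m_l})=\overline{\bm C}\,\varepsilon\sum_m\lfloor K(z_{m_1})+1\rfloor f(z_{m_1})$, together with the identity $\varepsilon=d^2$, is a Riemann sum on the partition $\{\Gamma_m\}$ of $\bm\Gamma$ and converges to $\overline{\bm C}\int_{\bm\Gamma}\lfloor K(y)+1\rfloor f(y)\,dS(y)$ at rate $\mathcal O(d)=\mathcal O(\varepsilon^{1/2})$, the $\mathcal C^2$-regularity of $\bm\Gamma$ and the $L^\infty$-bound on $\lfloor K+1\rfloor$ controlling the discretization residue. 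The limit object is a time-domain single-layer potential on $\bm\Gamma$; its continuous trace and the classical jump of its normal derivative supply the two transmission relations in (\ref{pde-intro}). Collecting the residues produced by the quasi-static approximation of $G_c(z_{m_l},z_{j_i},\cdot)$, by the averaging over each $D_{m_l}$, by the Foldy inversion, and by the Riemann-sum discretization---each at order $\mathcal O(\varepsilon^{1/2})$ in the $\mathrm H^\mathrm{r}_{0,\sigma}$-norms dictated by the functional setting---produces the announced convergence rate.
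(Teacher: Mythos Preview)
Your plan follows the same overall architecture as the paper---discrete Foldy-type system for the bubble averages, then passage to a continuum single-layer potential on $\bm\Gamma$ via a Riemann-sum argument---so the strategy is right. Two remarks, one structural and one substantive.

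Structurally, your steps (i)--(iii) re-derive what the paper simply imports from its companion work: the asymptotic expansion (\ref{assymptotic-expansion-us}) and the algebraic system (\ref{matrixmulti}) for the $\{Y_m\}$ are quoted as Proposition~\ref{main-prop}, not proved anew. The paper then rewrites this system in block form according to the local distribution $\lfloor K+1\rfloor$, postulates the matching continuous surface integral equation (\ref{effective-equation}), and compares the two. So you are doing more groundwork than necessary, though not incorrectly.

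The substantive gap is in your step (iv). You assert that the Riemann sum converges at rate $\mathcal O(d)$, but this requires the \emph{limiting} density on $\bm\Gamma$ to exist and to have enough regularity (in fact $W^{1,p}(\bm\Gamma)$ in space and $H^4$ in time, cf.\ the paper's Corollaries after Proposition~\ref{prop1}) to control the discretization residues term by term. The paper devotes an entire subsection to this: it proves well-posedness of the surface Lippmann--Schwinger equation (\ref{effective-equation}) in $H^r_{0,\sigma}(0,T;L^2(\bm\Gamma))$ via Fourier--Laplace transform and the coercivity estimate $\Re(-i\omega\langle\hat{\mathcal S}_\omega\hat Y,\hat Y\rangle)\ge 0$ for the retarded single-layer operator, and then bootstraps regularity through the smoothing property $\hat{\mathcal S}_\omega:L^p\to W^{1,p}$. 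Without this, you have no a priori object to which the discrete sums can be compared, and the error terms labeled $\textit{Error}^{(6)}$, $\textit{Error}^{(7)}$ in the paper (which need $\partial_x\mathcal F_m,\partial_t\mathcal F_m\in L^\infty$) are uncontrolled. Your Neumann-series inversion of the discrete system under (\ref{condition}) gives uniform $\ell^2$ bounds on the $U_{m_l}$, but not the pointwise spatial regularity of a limiting density; that has to be established independently for the continuous equation.
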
        

\noindent 
Interface boundary conditions of the form (\ref{pde-intro}), related to the existence of dispersion, appear naturally in applied acoustics, see \cite{D-B, R-H} where a more general family of impedance type boundary conditions of this form is studied with general kernels $K(\cdot)$. In our case $K$ is the sine function, i.e. $K(t)=sin(\omega_M^{-1} t)$. Such kernels need to satisfy admissibility conditions as the reality, passivity, and causality, see \cite{D-B, R-H}. Such conditions are satisfied in our case. Let us also observe the following two extreme cases:
\begin{enumerate}
    \item When $\omega_M \to \infty,$ then the transmission condition (\ref{pde-intro}) becomes $\mathbf{\Big[} \partial_\nu \mathbcal{W}_\infty\mathbf{\Big]}=0.$ In this case, we have $\mathbcal{W}^\textit{sc}_\infty = - u^\textit{in}.$ Therefore, $\bm{\Gamma}$ becomes a fully transparent screen.
    \item  When $\omega_M \to 0,$ then the transmission condition (\ref{pde-intro}) becomes $\mathbf{\Big[} \partial_\nu \mathbcal{W}_0\mathbf{\Big]} - \overline{\bm{C}}\ \big(\lfloor K(\cdot) + 1 \rfloor\big) \partial^2_t\mathbcal{W}_0 =0.$ In this case, we retrieve the non-dispersive transmission condition and hence $\bm{\Gamma}$ becomes a reflective/transmissive screen. If in addition, $\overline{\bm{C}}$ or $K$ are large then $\Gamma$ would behave as a fully reflective screen as in this case, at the limit, $\mathbcal{W}_0 =0$.
    \item For a moderate resonance $\omega_M$, the interface behaves as a screen with a memory effect which reflects the dispersion generated by the resonance.
\end{enumerate}


    \subsection{Comparison with Existing Mathematical and Physical Literature} 

\noindent
The study of effective transmission conditions for clusters of acoustic subwavelength resonators aligns closely with research in wave propagation, Metamaterials, and scattering theory. However, the specific contributions of this work can be distinguished in several key areas:
\begin{enumerate}
      \item \textit{Asymptotic Analysis of Subwavelength Resonators}.
       This study builds on foundational mathematical works that employ asymptotic techniques to model subwavelength resonators, particularly in acoustic and electromagnetic media. Classic analyses, such as those by Foldy \cite{Foldy} and Keller \cite{Keller}, have investigated wave scattering by discrete scatterers, often using multipole expansions or homogenization techniques. A comprehensive account on these approaches, and more classical ones, can be found in the books by Martin \cite{Martin-1, Martin-2}. More recently, works such as those by Ammari and co-authors have advanced the understanding of resonance phenomena by deriving precise asymptotics for subwavelength resonators in various geometric configurations, see \cite{AZ18, Ammari-et-al-0, Ammari-et-al-2, Ammari-et-al-3}, but mainly flat surfaces and in the time-harmonic regime in general.
       In comparison, the present study extends these methods by considering a cluster of resonators supported on smooth but non-flat hypersurfaces making the transmission condition more intricate and physically realistic. This distinguishes it from the mentioned studies focused on flat surfaces or unbounded configurations. The closest reference to our work is \cite{habib-sini} where the authors characterized the transmission condition in the time-harmonic regime for non-flat surfaces.  Finally, the current work complement the previous work \cite{Arpan-Sini-arxiv} where we have characterized the effective medium when the resonators are distributed in volumetric smooth and bounded domains. In that work, the effective model is given by compactly supported potential with memory effect (which translates the resonant effect as well). 

       \item \textit{Acoustic Metamaterials and Resonant Screens}.
       In the field of acoustic metamaterials, subwavelength resonators have been utilized to design devices with extraordinary properties, such as negative refractive indices, perfect absorbers, and cloaking devices. Existing studies in this domain often focus on homogenized models or numerical simulations to understand wave manipulation. Notably, the works of Cummer et al. \cite{Cummer-Christensen-Alu} and Christensen and García de Abajo \cite{Christensen-de Abajo}, and the references therein, have explored how bubbles and other resonant elements create effective acoustic metamaterials.
       The present work complements these studies by offering a rigorous mathematical characterization of a resonant screen, providing a clear physical interpretation through the Dirac-like surface potential. This bridges the gap between mathematical rigor and practical application, providing insights into the behavior of reflective and transmissive surfaces in regimes not fully explored in prior metamaterial studies.
       
       \item  \textit{Memory Effects and Time-Dependent Wave Dynamics}.
       The incorporation of memory effects into the transmission condition places this study in the context of dispersive and nonlocal wave equations. Memory effects are increasingly studied in models of viscoelasticity, electromagnetic waves, and acoustic scattering, as seen in the works of Mainardi \cite{Mainardi} and others. In contrast to these fields, where memory effects often arise from intrinsic material properties, with polynomials kernels, this study attributes them to the geometric arrangement and resonance of the bubbles, hence with sinusoidal kernels.
       The resulting time-convolution formulation offers a new perspective on designing surfaces that exhibit tunable memory properties. This feature can be particularly relevant for applications involving energy harvesting, acoustic insulation, or advanced wave control technologies.
\end{enumerate}


    \section{Proof of Theorem \ref{mainth}}     

\noindent
First, let us  highlight the key difference between our earlier work \cite{Arpan-Sini-arxiv} and the current one. In the previous one, the focus was on the volumetric distribution of acoustic bubbles, whereas in the current one, we deal with distribution of the bubbles along a surface (open or closed). Consequently, here we work with a surface integral equation rather than a volumetric integral equation.
\vspace{0.1cm}

\noindent 
We divide this section into three steps. As a first step, we recall the close form of the acoustic field generated by the cluster of resonating bubbles, from \cite{Arpan-Sini-JEEQ}, and rewrite it according to the distribution of these bubbles along the given surface. This allows us to naturally motivate the associated effective model, which exhibits a dispersive behavior. As a second step, we study the well-posedness, and the smoothing property, of the surface integral equation corresponding to this dispersive effective model. As a third step, we provide a detailed justification of the stated Theorem \ref{mainth} by comparing the close form solution of the cluster of bubbles with the discretized form of the solution the effective integral solution.


    \subsection{The dispersive effective model and the corresponding surface integral equation}     

\noindent
In order to deduce the dispersive effective mode and the corresponding surface integral equation, we first state the following proposition.
\begin{proposition}\cite[Theorem 1.1]{Arpan-Sini-JEEQ}\label{main-prop}
    Consider the acoustic problem (\ref{hyperbolic-problem}) generated by a cluster of resonating acoustic gas bubbles \( D_j \) for \( j = 1, 2, \ldots, M \). Then, under the following conditions:
\begin{align}\label{inversion-cond}
    \frac{\rho_\mathrm{c}}{4\pi} \, \text{vol}(\mathrm{B}_j) \, \Big(\frac{\varepsilon}{d}\Big)^6 \, \Big(\frac{1}{\lambda_1^{(3)}}\Big)^2 < 1 
    \quad \text{and} \quad 
    \mathbf{C} \, \max_{1 \leq m \leq M} \sum_{\substack{j=1 \\ j \neq m}}^M \frac{1}{4\pi |z_m - z_j|} < \omega_M^2,
\end{align}
with \( C := \max\limits_{1 \leq j \leq M} \mathbf{C}_j \), the scattered field \( u^\textit{sc}(\mathrm{x}, \mathrm{t}) \) has the following asymptotic expansion:
\begin{align} \label{assymptotic-expansion-us}
    u^\textit{sc}(\mathrm{x}, \mathrm{t}) = -\sum_{m=1}^M \frac{\mathbf{C}_j}{4\pi |\mathrm{x} - \mathrm{z}_m|} \mathrm{Y}_m\big(\mathrm{t} - \mathrm{c}_0^{-1} |\mathrm{x} - \mathrm{z}_m|\big) + \mathcal{O}(M\varepsilon^{2}) \quad \text{as} \quad \varepsilon \to 0,
\end{align}
for \( (\mathrm{x}, \mathrm{t}) \in \mathbb{R}^3 \setminus \mathrm{P} \times (0, \mathrm{T}) \) with \( \overline{\mathbf{\Omega}} \subset \subset \mathrm{P} \), where \( \big(\mathrm{Y}_j\big)_{j=1}^M \) is the vector solution to the following non-homogeneous second-order matrix differential equation with zero initial conditions:
\begin{align} \label{matrixmulti}
    \begin{cases}
        \omega_M^2 \frac{\mathrm{d}^2}{\mathrm{d} \mathrm{t}^2} \mathrm{Y}_m(\mathrm{t}) + \mathrm{Y}_m(\mathrm{t}) + \sum\limits_{\substack{j=1 \\ j \neq m}}^M \frac{\mathbf{C}_j}{4\pi |z_m - z_j|} \frac{\mathrm{d}^2}{\mathrm{d} \mathrm{t}^2} \mathrm{Y}_j\big(\mathrm{t} - \mathrm{c}_0^{-1} |\mathrm{z}_m - \mathrm{z}_j|\big) = \frac{\partial^2}{\partial t^2} u^\textit{in}, &\text{in} \; (0, \mathrm{T}), \\
        \mathrm{Y}_m(0) = \frac{\mathrm{d}}{\mathrm{d} \mathrm{t}} \mathrm{Y}_m(0) = 0,
    \end{cases}
\end{align}
where \( \omega_M^2 = \frac{\rho_c}{2 \overline{\kappa}_{\mathrm{b}_m}} \mathrm{A}_{\partial B_m} \) represents the square of the Minnaert frequency \( \omega_M \) of the bubble \( D_m \) (see, for instance, \cite{AZ18, DGS-21}), and \( \mathbf{C}_j := \overline{\mathbf{C}}_j \varepsilon \) with \( \overline{\mathbf{C}}_j := \text{vol}(B_j) \frac{\rho_c}{\overline{\kappa}_{\mathrm{b}_j}} \).
\\
Here, \( \displaystyle \mathrm{A}_{\partial B_m} := \frac{1}{|\partial B_m|} \int_{\partial B_m} \int_{\partial B_m} \frac{(\mathrm{x} - \mathrm{y}) \cdot \nu_\mathrm{x}}{|\mathrm{x} - \mathrm{y}|} \, d\sigma_\mathrm{x} \, d\sigma_\mathrm{y} \) is a geometric constant. The well-posedness of the system of differential equations (\ref{matrixmulti}) is discussed in \cite[Section 2.4]{Arpan-Sini-JEEQ}.
\end{proposition}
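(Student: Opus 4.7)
The strategy is to interpret the discrete Foldy expansion of Proposition~\ref{main-prop} as a Riemann-sum discretization, over the partition $\{\Gamma_{m_l}\}$ of $\bm{\Gamma}$, of a retarded single-layer surface integral equation, and then to identify its continuous solution with the dispersive transmission problem~\eqref{pde-intro}. Under the regime $M\sim d^{-2}\sim\varepsilon^{-1}$, the intrinsic $\mathcal{O}(M\varepsilon^{2})=\mathcal{O}(\varepsilon)$ remainder of Proposition~\ref{main-prop} is already sharper than the announced $\mathcal{O}(\varepsilon^{1/2})$, so the whole task reduces to controlling the quadrature error between the discrete densities $Y_{m_l}$ and a continuous density $\mathbf{Y}$ on $\bm{\Gamma}$.

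The first step is to introduce the retarded single-layer ansatz
$$\mathbcal{W}^{\textit{sc}}(x,t)=-\overline{\mathbf{C}}\int_{\bm{\Gamma}}\frac{(\lfloor K(z)+1\rfloor)\,\mathbf{Y}(z,t-c_0^{-1}|x-z|)}{4\pi|x-z|}\,d\sigma(z),$$
and to verify that $\mathbcal{W}:=u^{\textit{in}}+\mathbcal{W}^{\textit{sc}}$ solves the wave equation in $\mathbb{R}^{3}\setminus\bm{\Gamma}$, is continuous across $\bm{\Gamma}$, and satisfies $\bigl[\partial_\nu\mathbcal{W}\bigr]=\overline{\mathbf{C}}(\lfloor K+1\rfloor)\mathbf{Y}$. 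The transmission condition in~\eqref{pde-intro} then forces $\mathbf{Y}(z,t)=\omega_M^{-1}\int_0^{t}\sin(\omega_M^{-1}(t-\tau))\partial_\tau^{2}\mathbcal{W}(z,\tau)\,d\tau$; since this sinusoidal kernel is the Duhamel inverse of the oscillator operator $\omega_M^{2}\partial_t^{2}+\mathrm{Id}$, the identity above is equivalent, with zero initial data, to the surface integro-differential equation
$$\omega_M^{2}\partial_t^{2}\mathbf{Y}(z,t)+\mathbf{Y}(z,t)+\overline{\mathbf{C}}\!\int_{\bm{\Gamma}}\frac{(\lfloor K(z')+1\rfloor)\,\partial_t^{2}\mathbf{Y}(z',t-c_0^{-1}|z-z'|)}{4\pi|z-z'|}\,d\sigma(z')=\partial_t^{2}u^{\textit{in}}(z,t),$$
which is the manifest continuum analog of the discrete Foldy system~\eqref{matrixmulti} and identifies~\eqref{pde-intro} with the surface equation to be discretized.

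Well-posedness of this equation in the spaces $\mathrm{H}_{0,\sigma}^{\mathrm{r}}(0,T;\mathrm{L}^{2}(\bm{\Gamma}))$ is the second step, and it is established by Laplace transform in time: the transformed equation becomes a parametric surface integral equation whose invertibility is a direct consequence of the hypothesis~\eqref{condition}, the factor $\sqrt{K_{\max}}$ reflecting the $\mathrm{L}^{\infty}$-norm of the multiplication operator by $\lfloor K+1\rfloor$. A Neumann-series argument, run with uniform bounds on the half-plane $\{\mathrm{Re}\,s\ge\sigma\}$, yields existence, uniqueness, and enough time regularity on $\mathbf{Y}$ to sample it pointwise at the centers $z_{m_l}$; this is the smoothing step announced in the outline of Section~2.

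The third step compares $\mathbf{Y}(z_{m_l},t)$ with $Y_{m_l}(t)$. Evaluating the continuous equation at $z=z_{m_l}$ and splitting each $\bm{\Gamma}$-integral into the diagonal patch $\Gamma_{m_l}$ and its complement, the diagonal contribution is $\mathcal{O}(d)$ because $\int_{\Gamma_{m_l}}|z_{m_l}-z'|^{-1}d\sigma\sim d$ and is absorbed into the self-interaction term defining $\omega_M^{2}$ in~\eqref{matrixmulti}; the off-diagonal contribution is approximated by a midpoint quadrature against the patches of measure $\sim d^{2}\lfloor K+1\rfloor^{-1}$, yielding a further $\mathcal{O}(d)=\mathcal{O}(\varepsilon^{1/2})$ error. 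Closing the estimate by a $\sigma$-weighted Grönwall argument for the difference $\mathbf{Y}(z_{m_l},\cdot)-Y_{m_l}(\cdot)$, where condition~\eqref{condition} guarantees a strict coercivity gap, and evaluating the resulting single-layer at $x\in\mathbb{R}^{3}\setminus\overline{\mathbf{\Omega}}$ (where the kernel is smooth and the quadrature error is $\mathcal{O}(d^{2})$), delivers $u^{\textit{sc}}-\mathbcal{W}^{\textit{sc}}=\mathcal{O}(\varepsilon^{1/2})$ after absorbing the $\mathcal{O}(\varepsilon)$ remainder of Proposition~\ref{main-prop}. The main obstacle will be the off-diagonal singular quadrature: the Lipschitz constant of the kernel is $|z-z'|^{-2}$, so a naive midpoint estimate summed over $\sim d^{-2}$ patches produces a logarithmic divergence unless a near-field annulus of radius $d^{\alpha}$, $\alpha\in(0,1)$, is excised and handled by a direct triangle inequality while the far-field is treated by quadrature, the two contributions being balanced so as to preserve the $\mathcal{O}(\varepsilon^{1/2})$ rate.
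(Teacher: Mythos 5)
Your proposal does not prove the statement it is attached to. Proposition~\ref{main-prop} is the point--scatterer (Foldy--Lax type) approximation: starting from the transmission problem \eqref{hyperbolic-problem} posed on the \emph{actual} small bubbles $D_j$, one must show that under the invertibility conditions \eqref{inversion-cond} the scattered field collapses to the sum of retarded monopoles \eqref{assymptotic-expansion-us} whose amplitudes $\mathrm{Y}_m$ solve the coupled delay--ODE system \eqref{matrixmulti}, with the Minnaert frequency $\omega_M^2=\frac{\rho_c}{2\overline{\kappa}_{\mathrm{b}_m}}\mathrm{A}_{\partial B_m}$ emerging from the high-contrast scaling $\kappa_b=\varepsilon^2\overline{\kappa}_b$, $\rho_b=\varepsilon^2\overline{\rho}_b$, and with the multiple-scattering remainder controlled at order $\mathcal{O}(M\varepsilon^2)$. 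Your argument never touches any of this: it \emph{begins} by ``interpret[ing] the discrete Foldy expansion of Proposition~\ref{main-prop}'' and explicitly ``absorb[s] the $\mathcal{O}(\varepsilon)$ remainder of Proposition~\ref{main-prop}'' at the end. In other words, you assume the proposition as an input and then derive a different result, namely the homogenization statement $u^{\textit{sc}}-\mathbcal{W}^{\textit{sc}}=\mathcal{O}(\varepsilon^{1/2})$ of Theorem~\ref{mainth}. As a proof of Proposition~\ref{main-prop} this is circular; as a matter of record, the paper itself does not prove this proposition either but imports it verbatim from \cite[Theorem 1.1]{Arpan-Sini-JEEQ}, where the proof goes through the time-domain integral representation on the bubbles, the spectral decomposition \eqref{decomposition-introduction} with the eigenbasis of the Magnetization operator (whence the first condition in \eqref{inversion-cond} involving $\lambda_1^{(3)}$, which your sketch never uses), the extraction of the resonant mode, and the a priori estimates for the delayed ODE system.

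That said, what you actually wrote is a reasonable blind reconstruction of the proof of Theorem~\ref{mainth}: the single-layer ansatz for $\mathbcal{W}^{\textit{sc}}$, the equivalence of the dispersive jump condition with the surface Lippmann--Schwinger equation via the Duhamel inverse of $\omega_M^2\partial_t^2+\mathrm{Id}$, the Fourier--Laplace well-posedness, and the patchwise quadrature comparison between $\mathbf{Y}(z_{m_l},\cdot)$ and $\mathrm{Y}_{m_l}(\cdot)$ all match the paper's Sections 2.1--2.3 in spirit (the paper handles the near-diagonal singularity by the splitting into $\Gamma_m$, $\bigcup_{j\neq m}\Gamma_j$ and the boundary layer, together with the Counting Lemma~\ref{counting}, rather than by your $d^{\alpha}$-annulus excision, and it closes the discrete-vs-continuous comparison through the system \eqref{matrix2} rather than a Gr\"onwall argument). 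But none of this addresses the statement under review; to repair the submission you would need to supply the derivation of \eqref{assymptotic-expansion-us}--\eqref{matrixmulti} from \eqref{hyperbolic-problem} itself, or explicitly declare the proposition as a cited external result rather than something you are proving.
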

\noindent
Here, we aim to derive the effective medium properties for a configuration where subwavelength resonators, specifically acoustic gas bubbles, are distributed in a locally non-periodic manner over a \(C^2\)-regular surface \(\bm{\Gamma}\) of unit area. To satisfy \textit{Assumption \textcolor{blue}{1}} concerning the distribution of bubbles, a key step involves reformulating the algebraic system (\ref{matrixmulti}) into a generalized algebraic system that reflects the local distribution characteristics of the bubbles. This reformulation is analogous to the derivation presented in \cite[Section 4.1]{Arpan-Sini-arxiv}, and we omit detailed steps here for brevity. Accordingly, with a slight abuse of notation, we reformulate the algebraic system (\ref{matrixmulti}) as follows:
\begin{align}\label{tran1}
    \begin{cases}
             \mathbb{A}\frac{\mathrm{d}^2}{\mathrm{d}\mathrm{t}^2}\bm{\mathbb{Y}}_m(\mathrm{t}) + \bm{\mathbb{Y}}_m(\mathrm{t}) + \sum\limits_{\substack{j=1 \\ j\neq m}}^{[d^{-2}]}\mathbb C_{mj} \cdot\frac{\mathrm{d}^2}{\mathrm{d}\mathrm{t}^2}\mathbb Y_j(\mathrm{t}-\mathrm{c}_0^{-1}|\mathrm{z}_{m}-\mathrm{z}_{j}|) = \mathbcal{H}_m^\textit{in}\; \mbox{ in } (0, \mathrm{T}),
             \\ \bm{\mathbb{Y}}_m(\mathrm{0}) = \frac{\mathrm{d}}{\mathrm{d}\mathrm{t}}\bm{\mathbb{Y}}_m(\mathrm{0}) = 0,   
    \end{cases}
\end{align}
where 
    \begin{align}
        \mathbb C_{mj}:=
            \begin{pmatrix}
                   0 & \frac{\mathbf{C}_{j_2}}{4\pi |z_{m_1} - z_{j_2}|} & \ldots & \frac{\mathbf{C}_{j_{\lfloor K(\cdot) + 1 \rfloor}}}{4\pi |z_{m_1} - z_{j_{\lfloor K(\cdot) + 1 \rfloor}}|} \\
                  \frac{\mathbf{C}_{j_1}}{4\pi |z_{m_2} - z_{j_1}|} & 0 & \ldots & \frac{\mathbf{C}_{j_{\lfloor K(\cdot) + 1 \rfloor}}}{4\pi |z_{m_2} - z_{j_{\lfloor K(\cdot) + 1 \rfloor}}|} \\
                  \vdots & \vdots & \ddots & \vdots \\
                  \frac{\mathbf{C}_{j_1}}{4\pi |z_{m_{\lfloor K(\cdot) + 1 \rfloor}}-z_{j_1}|} & \frac{\mathbf{C}_{j_2}}{4\pi |z_{m_{\lfloor K(\cdot) + 1 \rfloor}}-z_{j_2}|} & \ldots & 0
            \end{pmatrix}
    \end{align}
is describing the $\big({\lfloor K(\cdot) + 1 \rfloor}\big)^2$-block interactions between the inclusions located in $\Omega_m$ and $\mathrm{D}_j$ for $j\ne m$ and the incident source $\mathbcal H^\textbf{in}_m := \Big(\frac{\partial^2}{\partial t^2}u^\textbf{in}(z_{m_1}), \frac{\partial^2}{\partial t^2}u^\textbf{in}(z_{m_2}), \ldots, \frac{\partial^2}{\partial t^2}u^\textbf{in}(z_{m_l})\Big)^t$. We define $\mathbb{A}$ as
\begin{align}
    \mathbb{A} = \big[\mathbcal a_{ij}\big]_{i,j=1}^{\lfloor K(\cdot) + 1 \rfloor}:=
     \begin{pmatrix}
        \omega_M^2 & 0 & \dots & 0 \\
        0 & \omega_M^2 & \dots & 0 \\
        \vdots & \vdots & \ddots & \vdots \\
        0 & 0 & \dots & \omega_M^2 
    \end{pmatrix}, 
\end{align}
with $\bm{\mathbb{Y}}_m=\Big(\widetilde{Y}_{m_1},\widetilde{Y}_{m_2},\ldots,\widetilde{Y}_{m_{K+1}}\Big)^t.$ Then, the well-posedness of the above algebric system can be shown under the condition with \(K_\text{max} := \sup_{z_{m_l}}\big(K(z_{m_l}) + 1\big)\),
    \begin{equation}
          \sqrt{K_\text{max}} \sum_{\substack{j=1 \\ j \neq m}}^M \sum_{\substack{i,l=1 \\ i \neq l}}^{\lfloor K(\cdot) + 1 \rfloor} \frac{\mathbf{C}_{j_i}}{4\pi |z_{m_l} - z_{j_i}|} < \omega_M^2.
    \end{equation}
As a next step, we aim to connect the previously discussed algebraic system (\ref{tran1}) to an general surface integral equation. Specifically, we analyze the following integral equation for $(\mathrm{x},t) \in \mathbb{R}^3\times (0, T)$:
    \begin{align}\label{surface-integral-equn}
          \rchi_{\mathbf{\Gamma}} \, \mathbb{A} \cdot \frac{\partial^2}{\partial t^2} \mathbcal{F}_m (\mathrm{x}, \mathrm{t}) + \mathbcal{F}_m (\mathrm{x}, \mathrm{t}) + \int_{\mathbf{\Gamma}} \mathbcal{C}(x, y) \cdot \frac{\partial^2}{\partial t^2} \mathbcal{F}_m(y, t - c_0^{-1} \vert x - y \vert) \, dy = \frac{\partial^2}{\partial t^2} \mathbcal{F}_m^\textbf{in}(\mathrm{x}, \mathrm{t}),
    \end{align}
where \(\mathbcal{F}_m^\textbf{in} = \big(u^\textbf{in}, u^\textbf{in}, \ldots, u^\textbf{in}\big)^t\). Initial conditions for \(\mathbcal{F} := \big(\mathcal{F}_{m_1}, \mathcal{F}_{m_2}, \ldots, \mathcal{F}_{m_{K+1}}\big)^t\) will be incorporated to the first order in this framework. The term \(\mathbcal{C}\) is defined as the interaction matrix representation given by
    \begin{align}
        \mathbcal{C}(x, y) = 
            \begin{pmatrix}
                0 & \frac{\overline{\bm{C}}}{4\pi|x-y|} & \cdots & \frac{\overline{\bm{C}}}{4\pi|x-y|} \\
                \frac{\overline{\bm{C}}}{4\pi|x-y|} & 0 & \cdots & \frac{\overline{\bm{C}}}{4\pi|x-y|} \\
                \vdots & \vdots & \ddots & \vdots \\
                \frac{\overline{\bm{C}}}{4\pi|x-y|} & \frac{\overline{\bm{C}}}{4\pi|x-y|} & \cdots & 0 
            \end{pmatrix},\; \text{for}\; x \neq y.
    \end{align}
Additionally, we define
    \begin{align}
        \mathbb{A} = \big[\mathbcal{a}_{ij}\big]_{i,j=1}^{\lfloor K(\cdot) + 1 \rfloor}:= 
        \begin{pmatrix}
              \omega_M^2 & 0 & \dots & 0 \\
              0 & \omega_M^2 & \dots & 0 \\
              \vdots & \vdots & \ddots & \vdots \\
              0 & 0 & \dots & \omega_M^2
        \end{pmatrix}.
    \end{align}
Upon analyzing the interaction matrix \(\mathbcal{C}\) and the definition of \(\mathbcal{F}_m\), the system of integral equations (\ref{surface-integral-equn}) can be reformulated as follows for \((x,t) \in \mathbb{R}^3\times(0, T)\)
    \begin{align}\label{ef-eq-1}
         \rchi_{\mathbf{\Gamma}} \, \mathbb{A} \cdot \frac{\partial^2}{\partial t^2} \mathbcal{F}_m(x, t) + \mathbcal{F}_m(\mathrm{x}, \mathrm{t}) + \int_{\mathbf{\Gamma}} 
         \begin{pmatrix}
               \frac{\overline{\bm{C}}}{4\pi|x-y|} \cdot \sum\limits_{l=1}^{\lfloor K(\cdot) + 1 \rfloor} \mathcal{F}_{m_l}(\mathrm{y}, \mathrm{t} - c_0^{-1} \vert x - y \vert) \\
               \vdots \\
               \frac{\overline{\bm{C}}}{4\pi|x-y|} \cdot \sum\limits_{l=1}^{\lfloor K(\cdot) + 1 \rfloor} \mathcal{F}_{m_l}(\mathrm{y}, \mathrm{t} - c_0^{-1} \vert x - y \vert)
        \end{pmatrix} 
        \, dy = \frac{\partial^2}{\partial t^2} \mathbcal{F}_m^\textit{in}.
    \end{align}
This formulation reveals that \(\mathbcal{F}_m(\cdot, \cdot)\) can be represented as the summation of its individual components, specifically \(\sum\limits_{\ell=1}^{\lfloor K(\cdot) + 1 \rfloor} \mathcal{F}_{m_\ell}(\cdot, \cdot)\). By summing terms in the previous expression, we obtain the corresponding integral equation for \(\sum\limits_{\ell=1}^{\lfloor K(\cdot) + 1 \rfloor} \mathcal{F}_{m_\ell}(\cdot, \cdot)\), applicable for \((x,t) \in \mathbb{R}^3\times(0, T)\):
    \begin{align}\label{ef-eq-2}
          &\nonumber\rchi_{\mathbf{\Gamma}} \, \omega_M^2 \, \frac{\partial^2}{\partial t^2} \sum_{l=1}^{\lfloor K(\cdot) 
          + 1 \rfloor} \mathcal{F}_{m_l}(x, t) + \sum_{l=1}^{\lfloor K(\cdot) + 1 \rfloor} \mathcal{F}_{m_l}(x, t) 
          \\ &+ \int_{\mathbf{\Gamma}} (\lfloor K(\cdot) + 1 \rfloor) \, \frac{\overline{\bm{C}}}{4\pi|x-y|}  \, \frac{\partial^2}{\partial t^2} \sum_{l=1}^{\lfloor K(\cdot) + 1 \rfloor} \mathcal{F}_{m_l}(\mathrm{y}, \mathrm{t} - c_0^{-1} \vert x - y \vert) \, dy 
          = (\lfloor K(\cdot) + 1 \rfloor) \frac{\partial^2}{\partial t^2} u^\textit{in}(\mathrm{x}, \mathrm{t}).
    \end{align}
By redefining the unknown function as \( \bm{\mathrm{Y}} := \frac{1}{\lfloor K(\cdot) + 1 \rfloor} \sum\limits_{l=1}^{\lfloor K(\cdot) + 1 \rfloor} \mathcal{F}_{m_l}(x, t)\), we arrive at the final integral equation for $(x,t)\in \mathbb{R}^3\times(0, \mathrm{T})$:
\begin{equation}\label{effective-equation}
\omega_M^2\; \rchi_{\mathbf{\Gamma}}\; \frac{\partial ^2}{\partial t^2} \bm{\mathrm{Y}} (\mathrm{x},\mathrm{t}) + \bm{\mathrm{Y}} (\mathrm{x},\mathrm{t}) + \int_{\mathbf{\Gamma}}(\lfloor K(\cdot) + 1 \rfloor)\frac{\overline{\bm{C}}}{4\pi|x-y|} \frac{\partial ^2}{\partial t^2}{\mathbf{Y} (y, \mathrm{t}-c_0^{-1}\vert \mathrm{x}-\mathrm{y}\vert)} d\mathrm{y} = \frac{\partial ^2}{\partial t^2}u^\textit{in}(\mathrm{x},\mathrm{t}).
\end{equation}
To this integro-differential equation, we impose zero initial conditions for $\mathbf{Y}$ and its first time derivative.
\\
We show the unique solvability of the above Lippmann-Schwinger equation (\ref{effective-equation}) in $\mathrm{H}^{\mathrm{r}}_{0,\sigma}\big(0,\mathrm{T};\mathrm{L}^2(\mathbf{\Gamma})\big)$ for $u^\textbf{in}$ belonging to the space $\mathrm{H}^{\mathrm{r}+1}_{0,\sigma}\big(0,\mathrm{T};\mathrm{L}^2(\mathbf{\Gamma})\big)$. In addition, we show some regularity results corresponding to the solution of (\ref{effective-equation}) in Section \ref{seclipp}.
\\
We introduce the unknown variable \( \mathbf{U} \) through the relation \( \mathbf{Y} = \frac{\partial^2}{\partial t^2}\mathbf{U} \), where \( \mathbf{U} \) satisfies the following Lippmann-Schwinger equation for \( (x, t) \in \mathbb{R}^3 \times (0, \mathrm{T}) \)
\begin{equation}\label{effective-equation-2}
\omega_M^2 \; \rchi_{\mathbf{\Gamma}} \; \frac{\partial^2}{\partial t^2} \mathbf{U} (x, t) + \mathbf{U} (x, t) + \int_{\mathbf{\Gamma}} (\lfloor K(\cdot) + 1 \rfloor)\frac{\overline{\bm{C}}}{4\pi|x-y|} \frac{\partial^2}{\partial t^2} \mathbf{U} (y, t-c_0^{-1} \vert x-y \vert) dy = u^\textit{in} (x, t),
\end{equation}
subject to zero initial conditions for \(\mathbf{U}\) up to the first order.
\\
We now define the following:
\begin{align}\label{effective-equation-3}
    \mathbcal{W}(x, t) := 
    \begin{cases}
        \mathbf{U}(x, t) + \omega_M^2 \; \frac{\partial^2}{\partial t^2} \mathbf{U} (x, t), & \text{if } (x, t) \in \mathbf{\Gamma} \times (0, T), \\[8pt]
        u^\textit{in}(x, t) - \displaystyle\int_{\mathbf{\Gamma}} (\lfloor K(\cdot) + 1 \rfloor)\frac{\overline{\bm{C}}}{4\pi|x-y|} \frac{\partial^2}{\partial t^2} \mathbf{U} (y, t-c_0^{-1} \vert x-y \vert) dy, & \text{for } (x, t) \in \mathbb{R}^3 \setminus \mathbf{\Gamma} \times (0, T).
    \end{cases}
\end{align}
The geometric configuration consists of a bounded domain \(\mathbf{\Omega}\) with a \(\mathcal{C}^2\)-smooth boundary \(\mathbf{\Gamma}\), and its complement, \(\mathbf{\Omega}^+ := \mathbb{R}^3 \setminus \overline{\mathbf{\Omega}}\), which is assumed to be connected. The Dirichlet trace of a function \(u\) on the boundary \(\mathbf{\Gamma}\), taken from the interior and exterior of \(\mathbf{\Omega}\), is denoted by \(u|_{-}\) and \(u|_{+}\), respectively. The corresponding Neumann traces, with the outward-pointing unit normal vector, are given by \(\partial_\nu u\big|_{-}\) for the interior and \(\partial_\nu u\big|_{+}\) for the exterior. The jumps of these quantities across the interface \(\mathbf{\Gamma}\) are expressed as
\(
[ u ] := u|_{+} - u|_{-},\; \text{and}\; \Big[ \partial_\nu u \Big] := \partial_\nu u\big|_{+} - \partial_\nu u\big|_{-}.
\)
The involvement of the surface integral induces a discontinuity across the boundary, leading to the following transmission conditions
    \begin{align}
         \mathbf{\Big[} \partial_\nu \mathbcal{W}\mathbf{\Big]} 
         = \overline{\bm{C}}\, \big(\lfloor K(\cdot) + 1 \rfloor\big) \partial_t^2\mathbf{\mathrm{U}}(x,t)
    \end{align}
which further imply the following conditions
    \begin{align*}
        \mathbf{\Big[} \partial_\nu \mathbcal{W}\mathbf{\Big]} + \displaystyle \overline{\bm{C}}\ \big(\lfloor K(\cdot) + 1 \rfloor\big)\Big(-\omega_M^{-2} \mathbcal{W}(x,t) +\omega_M^{-3}\int_0^t \sin\big(\omega_m^{-1}(t-\tau)\big)\, \mathbcal{W}(x,\tau)\, d\tau\Big) =0.
    \end{align*}
    An integration by parts shows that
    \begin{align*}
        \omega_M^{-2} f(x,t) - \omega_M^{-3}\int_0^t \sin\big(\omega_M^{-1}(t-\tau)\big)\, f(x,\tau)\, d\tau = \omega_M^{-1}\int_0^t \sin\big(\omega_M^{-1}(t-\tau)\big)\, \partial^2_\tau f(x,\tau)\, d\tau,
    \end{align*}
which further implies that the transmission condition above reduces to
    \begin{align}\label{Transmision}
         \mathbf{\Big[} \partial_\nu \mathbcal{W}\mathbf{\Big]} -\overline{\bm{C}}\ \big(\lfloor K(\cdot) + 1 \rfloor\big) \omega_M^{-1}\int_0^t \sin\big(\omega_M^{-1}(t-\tau)\big)\, \partial^2_\tau \mathbcal{W}(x,\tau)\, d\tau = 0.
    \end{align}
Consequently, $\mathbcal{W}$ satisfies the following hyperbolic problem:
    \begin{equation} \label{pde}
        \begin{cases}
            \big(c_0^{-2} \partial_t^2 - \Delta\big)\mathbcal{W}= \lambda(t)\delta_{x_0}(x), & \text{in}\ \mathbb{R}^3 \setminus \mathbf{\Gamma} \times (0,T), \\
            \mathbf{\big[} \mathbcal{W}\mathbf{\big]} = 0, & \text{on}\ \mathbf{\Gamma}\times(0,T), \\
            \mathbf{\Big[} \partial_\nu \mathbcal{W}\mathbf{\Big]} - \displaystyle \overline{\bm{C}}\ \big(\lfloor K(\cdot) + 1 \rfloor\big)\omega_M^{-1}\int_0^t \sin\big(\omega_M^{-1}(t-\tau)\big)\, \partial_\tau^2\mathbcal{W}(x,\tau)\, d\tau=0 & \text{on}\ \mathbf{\Gamma}\times(0,T), \\
             \partial_t^i \mathbcal{W}(x,0) = 0, & \text{in}\ \mathbb{R}^3,\; i = 0,1.
        \end{cases}
    \end{equation}
We note that $\mathbcal{W}$ is a solution of (\ref{pde}) if and only if $\mathbf{\mathrm{U}}$ satisfies the following surface integral for $(\mathrm{x},t) \in \mathbb{R}^3\times (0, T)$:
\begin{equation}
\omega_M^2 \; \rchi_{\mathbf{\Gamma}} \; \frac{\partial^2}{\partial t^2} \mathbf{U} (x, t) + \mathbf{U} (x, t) + \int_{\mathbf{\Gamma}} (\lfloor K(\cdot) + 1 \rfloor)\frac{\overline{\bm{C}}}{4\pi|x-y|} \frac{\partial^2}{\partial t^2} \mathbf{U} (y, t-c_0^{-1} \vert x-y \vert) dy = u^\textit{in} (x, t).
\end{equation}
Next, we proceed to the following Section to prove the solvability of the above surface integral equation.


    \subsection{Solvability of the Lippmann-Schwinger equation (\ref{effective-equation})}\label{seclipp}    

\noindent
To establish the solvability of the Lippmann-Schwinger equation (\ref{effective-equation_1}), we employ the framework introduced by Bamberger and Ha-Duong in \cite{hduong} and further developed by Sayas in \cite{sayas}, both of which utilize the Fourier-Laplace transform as a fundamental tool. In addition, we draw upon the methodologies and results presented in \cite{le-monk, Arpan-Sini-SIMA, sini-wang}, where this technique coupled with fine spectral decomposition of involved integral operators has been effectively applied to demonstrate the solvability of various classes of equations. Based on these approaches, we present and prove the following proposition.

\begin{proposition}\label{prop1}     

The Lippmann-Schwinger equation, stated on $ \mathbf{\Gamma}\times (0, T)$, with $\hbar:=\omega_M^2$
\begin{equation}\label{effective-equation_1}
\hbar\; \rchi_{\mathbf{\Gamma}}\; \frac{\partial ^2}{\partial t^2} \bm{\mathrm{Y}} (\mathrm{x},\mathrm{t}) + \bm{\mathrm{Y}} (\mathrm{x},\mathrm{t}) + \int_{\mathbf{\Gamma}}(\lfloor K(\cdot) + 1 \rfloor)\frac{\overline{\bm{C}}}{4\pi|x-y|} \frac{\partial ^2}{\partial t^2}{\mathbf{Y} (y, t-c_0^{-1}\vert x-y\vert)} dy = \frac{\partial ^2}{\partial t^2}u^\textit{in}(\mathrm{x},\mathrm{t}),
\end{equation}
has a unique solution in $\mathrm{H}^{\mathrm{r}}_{0,\sigma}\big(0,\mathrm{T};\mathrm{L}^2(\mathbf{\Gamma})\big)$ for $u^\textit{in} \in \mathrm{H}^{\mathrm{r}+1}_{0,\sigma}\big(0,\mathrm{T};\mathrm{L}^2(\mathbf{\Gamma})\big).$

\end{proposition}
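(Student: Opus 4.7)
The plan is to follow the Bamberger--Ha-Duong/Sayas framework by passing to the Fourier--Laplace domain, establishing a resolvent estimate there with controlled polynomial growth in the spectral parameter, and then inverting the transform to recover a time-domain solution in the weighted Sobolev scale $\mathrm{H}^{\mathrm{r}}_{0,\sigma}\!\big(0,T;\mathrm{L}^2(\mathbf{\Gamma})\big)$. Throughout, I use that $\lfloor K(\cdot)+1\rfloor \in L^\infty(\mathbf{\Gamma})$, as noted after Assumption 1, so that the weight factor in the integral kernel is harmless.

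First, I would apply the Laplace transform $s=\sigma+i\tau$, $\sigma>0$, in the time variable. Equation~(\ref{effective-equation_1}) becomes, after restricting $x$ to $\mathbf{\Gamma}$, the surface integral equation
\begin{equation*}
\big(1+\hbar s^2\big)\,\widehat{\mathbf{Y}}(x,s) \;+\; s^2\!\int_{\mathbf{\Gamma}} (\lfloor K(y)+1\rfloor)\,\frac{\overline{\bm{C}}\,e^{-s c_0^{-1}|x-y|}}{4\pi|x-y|}\,\widehat{\mathbf{Y}}(y,s)\,dy \;=\; s^2\,\widehat{u^\textit{in}}(x,s).
\end{equation*}
Writing this as $\mathcal{A}(s)\widehat{\mathbf{Y}}=s^2\widehat{u^\textit{in}}$ with $\mathcal{A}(s):=(1+\hbar s^2)\,\mathrm{Id}+s^2\,\mathcal{S}_{K,s}$, where $\mathcal{S}_{K,s}$ denotes the weighted retarded single-layer-type operator on $\mathbf{\Gamma}$, the task reduces to inverting $\mathcal{A}(s)$ on $\mathrm{L}^2(\mathbf{\Gamma})$. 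I would exploit the factorization $\mathcal{A}(s)=(1+\hbar s^2)\big(\mathrm{Id}+\tfrac{s^2}{1+\hbar s^2}\mathcal{S}_{K,s}\big)$: the operator $\mathcal{S}_{K,s}:\mathrm{L}^2(\mathbf{\Gamma})\to\mathrm{L}^2(\mathbf{\Gamma})$ is bounded with norm $\lesssim e^{-\sigma c_0^{-1}\operatorname{diam}(\mathbf{\Gamma})}\cdot \|\lfloor K+1\rfloor\|_{L^\infty}$ times a geometric constant depending only on $\mathbf{\Gamma}$ (the Newton kernel being weakly singular on a $C^2$ surface), and the scalar prefactor $s^2/(1+\hbar s^2)$ stays bounded as $|s|\to\infty$. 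For large $|s|$ a standard Neumann-series argument then yields invertibility; for bounded $s$ in the right half-plane the analytic Fredholm alternative applies, since $\mathcal{S}_{K,s}$ is compact (weakly singular kernel on a compact surface) and injectivity of $\mathcal{A}(s)$ follows from the passivity/causality structure, namely by testing against $\overline{\widehat{\mathbf{Y}}}$ and taking real parts to rule out non-trivial kernels on $\Re s>0$ via the positive-real character of the Helmholtz single-layer operator established in \cite{sayas}. This yields a resolvent bound of the form $\|\mathcal{A}(s)^{-1}\|\lesssim |s|^{-k}\,P(|s|)$ for some fixed $k,P$ depending only on $\mathbf{\Gamma}$, $\hbar$, $\overline{\bm{C}}$, $\|\lfloor K+1\rfloor\|_{L^\infty}$, and $c_0$.

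Combining the two factors, I obtain $\|\widehat{\mathbf{Y}}(\cdot,s)\|_{\mathrm{L}^2(\mathbf{\Gamma})} \lesssim P(|s|)\,\|\widehat{u^\textit{in}}(\cdot,s)\|_{\mathrm{L}^2(\mathbf{\Gamma})}$ uniformly for $\Re s=\sigma>0$. The polynomial growth in $|s|$ translates, via the Plancherel/Parseval identity for the Laplace transform on $(0,\infty)$ adapted to the weight $e^{-2\sigma t}$ (see \cite{sayas}), into the loss of one time derivative that accounts for the shift $\mathrm{r}+1\mapsto \mathrm{r}$ in the statement: if $u^\textit{in}\in \mathrm{H}^{\mathrm{r}+1}_{0,\sigma}\!\big(0,T;\mathrm{L}^2(\mathbf{\Gamma})\big)$, then $\mathbf{Y}\in \mathrm{H}^{\mathrm{r}}_{0,\sigma}\!\big(0,T;\mathrm{L}^2(\mathbf{\Gamma})\big)$. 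Causality, and hence the vanishing of $\mathbf{Y}$ on $(-\infty,0)$, is automatic because $\widehat{\mathbf{Y}}$ is holomorphic and tempered on $\Re s>0$ with the above estimates.

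The main obstacle I anticipate is the injectivity and precise resolvent bound for $\mathcal{A}(s)$ uniformly on the line $\Re s=\sigma$, particularly for moderate $|s|$ where neither the small-$|s|$ identity-perturbation regime nor the large-$|s|$ Neumann regime is available. This is where the detailed spectral decomposition of the retarded single-layer operator on $\mathbf{\Gamma}$ developed in \cite{le-monk, Arpan-Sini-SIMA, sini-wang} becomes indispensable: one splits $\widehat{\mathbf{Y}}$ along the orthonormal basis of eigenfunctions of the static single-layer operator on $\mathbf{\Gamma}$, reduces $\mathcal{A}(s)$ to a family of scalar symbols whose non-vanishing in $\Re s>0$ is checked by a direct contour/dispersion argument, and assembles the resulting component estimates to obtain the global bound. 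Once these ingredients are in place, the remaining assembly (Fredholm, Paley--Wiener, return to the time domain) proceeds along standard lines.
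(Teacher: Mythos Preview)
Your overall framework---pass to the Laplace domain, establish a resolvent estimate with polynomial growth in the spectral parameter, then invoke the Lubich/Sayas transfer principle to return to the time-domain weighted Sobolev scale---matches the paper's. The divergence is entirely in how you obtain the resolvent bound, and here your route is both more elaborate and contains genuine gaps.

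First, the claimed bound $\|\mathcal{S}_{K,s}\|_{L^2\to L^2}\lesssim e^{-\sigma c_0^{-1}\operatorname{diam}(\mathbf{\Gamma})}$ is false: the kernel $e^{-sc_0^{-1}|x-y|}/(4\pi|x-y|)$ has no exponential smallness near the diagonal, so the $L^2(\mathbf{\Gamma})\to L^2(\mathbf{\Gamma})$ norm is controlled only by the static single-layer norm, with no decay in $\sigma$ or in $|s|$. Consequently your Neumann-series argument for large $|s|$ does not close: the perturbation size $\tfrac{|s|^2}{|1+\hbar s^2|}\|\mathcal{S}_{K,s}\|$ tends to a fixed constant of order $\hbar^{-1}C_{\mathbf{\Gamma}}$, which is not assumed to be $<1$. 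Second, the analytic Fredholm alternative yields invertibility off a discrete set but gives no quantitative bound on $\|\mathcal{A}(s)^{-1}\|$ uniform on the line $\Re s=\sigma$; your fallback to a ``spectral decomposition of the single-layer operator on $\mathbf{\Gamma}$'' is not what the cited references actually provide for a general curved surface, and making it precise would be a separate project.

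The paper sidesteps all of this with a single variational step. In the Fourier--Laplace variable $\omega$ (upper half-plane), one tests the transformed equation against $i\omega\hat{\mathbf{Y}}$ and takes real parts. The crucial input is the Bamberger--Ha-Duong sign property
\[
\Re\!\big(-i\omega\,\langle\hat{\mathcal{S}}_\omega\hat{\mathbf{Y}},\hat{\mathbf{Y}}\rangle_{L^2(\mathbf{\Gamma})}\big)\ge 0,
\]
valid for every $\omega$ with $\Im\omega>0$. Together with $\Re(-i\omega)=\Im\omega>0$ applied to the $\hbar\omega^2$ term, this delivers coercivity \emph{uniformly} in $\omega$ and the clean estimate $\|\hat{\mathbf{Y}}\|_{L^2(\mathbf{\Gamma})}\le (\Im\omega)^{-1}|\omega|\,\|\widehat{u^{\textit{in}}}\|_{L^2(\mathbf{\Gamma})}$---exactly one power of $|\omega|$, hence exactly one lost time derivative after Lubich's lemma. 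You already invoke the ``positive-real character'' for injectivity; the point you are missing is that the same property, used with the multiplier $i\omega$ rather than a bare test against $\overline{\hat{\mathbf{Y}}}$, gives the full quantitative resolvent bound in one stroke, with no regime splitting, no Fredholm theory, and no spectral decomposition.
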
       

\begin{proof}         
Here, for simplicity, we present the proof for the case \( K \equiv 0 \). However, since \( \lfloor K(\cdot) + 1\rfloor \in L^\infty(\bm{\Omega}) \), a similar argument applies when \( K \) is a variable function. Next, we define the retarded single-layer wave operator as follows
\[
\mathbcal{S}u(x,t) := \int_{\mathbf{\Gamma}} \frac{1}{4\pi |x - y|} u(y, t - c_0^{-1}|x - y|) \, d\sigma_y.
\]
Next, we analyze this integral representation in the Fourier-Laplace domain by applying the Fourier-Laplace transform with respect to the time variable. For a Laplace-transformable function \(u\), with transform parameter \(\omega = \xi + i\sigma\), we define the Fourier-Laplace transform as
\[
\mathcal{L}(u)(\omega) = \hat{u}(\omega) = \int_{-\infty}^{+\infty} e^{i\omega t} u(t) \, dt = \mathcal{F}(e^{-\sigma t} u)(\xi),
\]
where \(\mathcal{F}\) denotes the Fourier transform.
\\
It is known that if \(u\) is causal, i.e., \(u(t) = 0\) for \(t < 0\), and for real \(\sigma\), \(e^{-\sigma t} u\) is tempered, then \(u\) is Laplace-transformable, and its Fourier transform \(\mathcal{F}\) is well-defined. This leads us to express the retarded wave operator in the Fourier-Laplace domain as:
\[
\mathcal{L}(\mathbcal{S}u)(x,\omega) = \int_{\mathbf{\Gamma}} \frac{e^{i\omega |x - y|}}{4\pi |x - y|} \hat{u}(y,\omega) \, d\sigma_y =: \hat{\mathbcal{S}}_\omega \hat{u}(x,\omega),
\]
which is the single-layer potential for the Helmholtz equation, valid for \(x \notin \mathbf{\Gamma}\) and for all frequencies \(\omega\).\\
Due to the assumption of causality for the function \( u \), in the Fourier-Laplace transforms, we are allowed to consider the frequency \(\omega\) in the upper complex plane, defined as \(\mathbb{C}_{\omega_0} := \{ \omega \in \mathbb{C} : \Im(\omega) \geq \omega_0 > 0 \}\). This ensures that the transform is well-defined in a region where the imaginary part of \(\omega\) is bounded below by a positive constant, \(\omega_0\), consistent with the temporal decay condition imposed by causality.
\\
The Lippmann-Schwinger equation (\ref{effective-equation_1}) in the Laplace-Fourier domain is expressed as follows
\[
    (\hbar\;\omega^2 + 1)\;\hat{\mathbf{Y}} + \omega^2\; \hat{\mathbcal{S}}_\omega\big(\hat{\mathbf{Y}}\big) = \omega^2\; \widehat{u}^\textit{in},\quad \text{on}\ \mathbf{\Gamma}.
\]
We proceed by showing that $\hat{\mathbf{Y}}$ satisfies the following variational formulation
\[
    \begin{cases}
        \varphi \in L^2(\mathbf{\Gamma})\ \text{such that}\\
        a_\omega(\hat{\mathbf{Y}},\varphi) := \big\langle (\hbar\;\omega^2 + 1)\hat{\mathbf{Y}}, \varphi \big\rangle + \omega^2 \big\langle \hat{\mathbcal{S}}_\omega\big(\hat{\mathbf{Y}}\big), \varphi \big\rangle = \omega^2 \big\langle \widehat{u}^\textit{in}, \varphi \big\rangle,\quad \forall \varphi \in L^2(\mathbf{\Gamma}).
    \end{cases}
\]
Next, to establish the coercivity of the sesquilinear form, we select $\varphi = i\omega\hat{\mathbf{Y}}$, yielding the following variational expression
\[
    a_\omega(\hat{\mathbf{Y}}, i\omega\hat{\mathbf{Y}}) := \big\langle (\hbar\;\omega^2 + 1)\hat{\mathbf{Y}}, i\omega\hat{\mathbf{Y}} \big\rangle + \omega^2 \big\langle \hat{\mathbcal{S}}_\omega\big(\hat{\mathbf{Y}}\big), i\omega\hat{\mathbf{Y}} \big\rangle = \omega^2 \big\langle \widehat{u}^\textit{in}, i\omega\hat{\mathbf{Y}} \big\rangle,\quad \forall \hat{\mathbf{Y}} \in L^2(\mathbf{\Gamma}),
\]
which simplifies to
\[
    a_\omega(\hat{\mathbf{Y}}, i\omega\hat{\mathbf{Y}}) := -i\omega |\omega|^2 \hbar \big\langle \hat{\mathbf{Y}}, \hat{\mathbf{Y}} \big\rangle - i \overline{\omega} \big\langle \hat{\mathbf{Y}}, \hat{\mathbf{Y}} \big\rangle + |\omega|^2 \Big(-i\omega \big\langle \hat{\mathbcal{S}}_\omega\big(\hat{\mathbf{Y}}\big), \hat{\mathbf{Y}} \big\rangle \Big) = -i \omega |\omega|^2 \big\langle \widehat{u}^\textit{in}, \hat{\mathbf{Y}} \big\rangle,\quad \forall \hat{\mathbf{Y}} \in L^2(\mathbf{\Gamma}).
\]
Referring to the results in \cite[Section 1.3]{coercive-1} and \cite[pp. 314]{Ha-Duong-1}, we infer the following
\[
    \Re\Big(-i\omega \big\langle \hat{\mathbcal{S}}_\omega\big(\hat{\mathbf{Y}}\big), \hat{\mathbf{Y}} \big\rangle \Big) \geq 0,\quad \forall \hat{\mathbf{Y}} \in L^2(\mathbf{\Gamma}).
\]
Thus, by taking the real part, we obtain
\begin{align}\label{esti1}
    \Re\Big( a_\omega(\hat{\mathbf{Y}}, i\omega\hat{\mathbf{Y}}) \Big) \geq \omega_0 |\omega|^2 \Vert \hat{\mathbf{Y}} \Vert^2_{L^2(\mathbf{\Gamma})}.
\end{align}
We also establish the following estimate
\begin{align}\label{esti2}
    \Big| -i\ \omega |\omega|^2 \big\langle \widehat{u}^\textit{in}, \hat{\mathbf{Y}} \big\rangle \Big| \leq |\omega|^3 \Vert \widehat{u}^\textit{in} \Vert_{L^2(\mathbf{\Gamma})} \Vert \hat{\mathbf{Y}} \Vert_{L^2(\mathbf{\Gamma})}.
\end{align}
Consequently, combining estimates (\ref{esti1}) and (\ref{esti2}), we deduce
\[
    \Vert \hat{\mathbf{Y}} \Vert_{L^2(\mathbf{\Gamma})} \leq \frac{|\omega|}{\sigma} \Vert \widehat{u}^\textit{in} \Vert_{L^2(\mathbf{\Gamma})},
\]
which proves the unique solvability of the Lippmann-Schwinger equation (\ref{effective-equation_1}) in the Fourier-Laplace domain. Furthermore, we obtain the following norm estimate
\begin{align}\label{e1}
    \Big\Vert \Big((\hbar\;\omega^2 + 1)\mathbf{I} + \omega^2\; \widehat{\bm{\mathcal{S}}}_\omega \Big)^{-1} \Big\Vert_{\mathrm{L}^2(\mathbf{\Gamma}) \to \mathrm{L}^2(\mathbf{\Gamma})} \leq \frac{|\omega|}{\sigma}.
\end{align}
To present the main result, we utilize the Lubich notation \cite{lubich}. Consequently, we state the following Lemma.
\begin{lemma}\label{lubich}\cite[Lemma 2.1]{lubich}
If \( F(\bm{s}): \mathbb{C}_+ := \{\bm{s} \in \mathbb{C} : \Re \bm{s} > 0\} \to X \), where \( X \) is a Hilbert space, as an analytic function that is assumed to be polynomially bounded. Specifically, for \( j \in \mathbb{R} \), and for all \( \sigma \), there exists a constant \( C_F(\sigma) < \infty \) such that  
\begin{equation}\label{1.1}
    \Vert F(\bm{s}) \Vert_X \leq C_F(\Re \bm s) |\bm{s}|^j, \quad \text{for} \ \Re\bm{s} \geq \sigma > 0,
\end{equation} 
then the operator \( F(\partial_t) \) extends by density to a bounded linear operator
\[
    F(\partial_t) : H_0^{r+j}(0,T; X) \to H^r_0(0,T; Y),\; \text{for any}\; r \in \mathbb{R}.
\]
\end{lemma}
\noindent
Subsequently, we define the operator $\hat{\mathbcal{A}}_\omega: \mathrm{L}^2(\mathbf{\Gamma}) \to \mathrm{L}^2(\mathbf{\Gamma})$ as follows
\[
    \hat{\mathbcal{A}}_\omega(\omega) := \Big( (\hbar\;\omega^2 + 1)\mathbf{I} + \omega^2\; \widehat{\bm{\mathcal{S}}}_\omega \Big)^{-1}.
\]
By applying the inverse Fourier-Laplace transform, we define the corresponding time-domain solution operator for the Lippmann-Schwinger equation (\ref{effective-equation_1}) as $\hat{\mathbcal{A}}_\omega(\partial_t)$. Then, based on the estimate (\ref{e1}) and Lemma \ref{lubich}, we conclude
\[
    \hat{\mathbcal{A}}_\omega(\partial_t) : \mathrm{H}^{\mathrm{r}+1}_{0,\sigma}(0,\mathrm{T}; \mathrm{L}^2(\mathbf{\Gamma})) \to \mathrm{H}^{\mathrm{r}}_{0,\sigma}(0,\mathrm{T}; \mathrm{L}^2(\mathbf{\Gamma}))
\]
is bounded.
\\
This completes the proof.

\end{proof}      

\noindent
We now proceed by stating the following Corollary, which builds upon Proposition (\ref{prop1}). This result is necessary for the next section, where higher regularity of the solution to the Lippmann-Schwinger equation (\ref{effective-equation_1}) is required.

\begin{corollary}\label{prop2}   

Consider the Lippmann-Schwinger equation (\ref{effective-equation_1}). The solution $\mathbf{Y}$ belongs to the space $\mathrm{H}^{4}_{0,\sigma}(0,\mathrm{T}; \mathrm{W}^{1,p}(\mathbf{\Gamma}))$ for $p \in (1,\infty)$. In addition to that, we have $\partial_t^3\mathbf{Y} \in \mathbcal C(0,\mathrm{T}; \mathrm{L}^\infty(\mathbf{\Gamma}))$.

\end{corollary}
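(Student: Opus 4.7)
The plan is to combine the Fourier-Laplace framework of Proposition \ref{prop1} with classical mapping properties of the Helmholtz single-layer potential, and then conclude with standard Sobolev embeddings on the two-dimensional surface $\mathbf{\Gamma}$. First, I would upgrade the time regularity obtained in Proposition \ref{prop1}. Since the retarded single-layer operator appearing in \eqref{effective-equation_1} commutes with $\partial_t$, differentiating the equation $k$ times in $t$ shows that $\partial_t^k \mathbf{Y}$ solves the same Lippmann-Schwinger equation with right-hand side $\partial_t^{k+2}u^\textit{in}$. Reinvoking the resolvent estimate \eqref{e1} together with Lemma \ref{lubich}, and using that $\lambda \in \mathcal{C}^9(\mathbb{R})$ makes $u^\textit{in}$ an element of $\mathrm{H}^{5}_{0,\sigma}(0,\mathrm{T};\mathrm{L}^2(\mathbf{\Gamma}))$ (and better), yields $\mathbf{Y}\in \mathrm{H}^{4}_{0,\sigma}(0,\mathrm{T};\mathrm{L}^2(\mathbf{\Gamma}))$.

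The next step is to gain one spatial derivative via the smoothing of the single layer. Rewriting the equation on $\mathbf{\Gamma}$ as
\begin{equation*}
(\hbar\,\partial_t^2 + 1)\,\mathbf{Y}(x,t) = \partial_t^2 u^\textit{in}(x,t) - \overline{\bm{C}}\,\bigl(\lfloor K(\cdot)+1\rfloor\bigr)\,\mathbcal{S}\!\left[\partial_t^2 \mathbf{Y}\right](x,t),
\end{equation*}
and Fourier-Laplace transforming, I would exploit the fact that on a $\mathcal{C}^2$-surface the Helmholtz single-layer operator $\widehat{\mathbcal{S}}_\omega$ is a pseudodifferential operator of order $-1$, and hence maps $\mathrm{L}^p(\mathbf{\Gamma})$ boundedly into $\mathrm{W}^{1,p}(\mathbf{\Gamma})$ for every $p\in(1,\infty)$, with operator norm growing only polynomially in $|\omega|$ on any half-plane $\mathbb{C}_{\omega_0}$ (cf.\ \cite{sayas}). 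Since the scalar factor $(\hbar\omega^2+1)^{-1}$ is spatially inert and polynomially bounded, and $\widehat{u}^\textit{in}$ is spatially smooth, a bootstrap starting from $\widehat{\mathbf{Y}}\in \mathrm{L}^2(\mathbf{\Gamma}) \hookrightarrow \mathrm{L}^p(\mathbf{\Gamma})$ for $p\le 2$, and iterated using the Sobolev embedding $\mathrm{W}^{1,p}(\mathbf{\Gamma}) \hookrightarrow \mathrm{L}^q(\mathbf{\Gamma})$ on the two-dimensional surface, gives $\widehat{\mathbf{Y}}\in \mathrm{W}^{1,p}(\mathbf{\Gamma})$ for every $p\in(1,\infty)$. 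Transferring back to the time domain via Lemma \ref{lubich} then yields $\mathbf{Y}\in \mathrm{H}^{4}_{0,\sigma}(0,\mathrm{T}; \mathrm{W}^{1,p}(\mathbf{\Gamma}))$.

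Finally, the $\mathcal{C}^0$-in-time, $\mathrm{L}^\infty$-in-space statement follows from two classical embeddings: on the two-dimensional surface $\mathbf{\Gamma}$ one has $\mathrm{W}^{1,p}(\mathbf{\Gamma}) \hookrightarrow \mathrm{L}^\infty(\mathbf{\Gamma})$ whenever $p>2$, and in one time variable $\mathrm{H}^{1}(0,\mathrm{T}) \hookrightarrow \mathcal{C}(0,\mathrm{T})$. Fixing any such $p$ therefore gives
\begin{equation*}
\mathbf{Y}\in \mathrm{H}^{4}_{0,\sigma}\bigl(0,\mathrm{T}; \mathrm{W}^{1,p}(\mathbf{\Gamma})\bigr) \hookrightarrow \mathcal{C}^{3}\bigl(0,\mathrm{T}; \mathrm{L}^\infty(\mathbf{\Gamma})\bigr),
\end{equation*}
so in particular $\partial_t^3\mathbf{Y} \in \mathcal{C}(0,\mathrm{T}; \mathrm{L}^\infty(\mathbf{\Gamma}))$. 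The most delicate point of the argument is the second step: one must control the $\omega$-dependence of $\widehat{\mathbcal{S}}_\omega : \mathrm{L}^p(\mathbf{\Gamma}) \to \mathrm{W}^{1,p}(\mathbf{\Gamma})$ uniformly on $\mathbb{C}_{\omega_0}$ so that Lemma \ref{lubich} is legitimately applicable in the $\mathrm{W}^{1,p}$ setting rather than merely in $\mathrm{L}^2$. This requires a careful decomposition of the kernel $e^{i\omega|x-y|}/(4\pi|x-y|)$ into its static Calderón-Zygmund part, which provides the gain of one derivative independently of $\omega$, and a smooth oscillatory remainder whose $\omega$-growth is polynomial and can be absorbed.
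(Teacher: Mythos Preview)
Your proposal is correct and follows essentially the same route as the paper: both rewrite the equation as $(\hbar\,\partial_t^2+1)\mathbf{Y}=\partial_t^2 u^\textit{in}-\overline{\bm C}\,\mathbcal{S}[\partial_t^2\mathbf{Y}]$, invoke Proposition~\ref{prop1} to obtain $\mathbf{Y}\in H^4_{0,\sigma}(0,T;L^2(\mathbf{\Gamma}))$, bootstrap the spatial regularity through the smoothing of the single layer, and finish with the embeddings $H^4(0,T)\hookrightarrow\mathcal{C}^3(0,T)$ and $W^{1,p}(\mathbf{\Gamma})\hookrightarrow L^\infty(\mathbf{\Gamma})$ for $p>2$. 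The only notable difference is in the handling of the second step: the paper inverts $(\hbar\,\partial_t^2+1)$ directly in the time domain as an ODE with zero initial data (so the multiplier $(\hbar\omega^2+1)^{-1}$ never has to be discussed on its own), and it avoids your explicit kernel decomposition by first using the $L^2\!\to\!H^1$ smoothing of the retarded single layer from \cite{Q-S}, then embedding $H^1(\mathbf{\Gamma})\hookrightarrow L^p(\mathbf{\Gamma})$ on the two-dimensional surface, and only afterwards invoking the $L^p\!\to\!W^{1,p}$ mapping from \cite{mitrea}. This two-step route turns the $\omega$-uniformity issue you correctly flag into a black-box citation rather than an estimate you have to carry out yourself; your Calder\'on--Zygmund decomposition would of course also work, but is more labor than the paper expends.
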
     

\begin{proof}        

We begin by reformulating the Lippmann-Schwinger equation (\ref{effective-equation_1}) as a second-order non-homogeneous ordinary differential equation
\begin{align}\label{ode}
    \begin{cases}
    \displaystyle
    \hbar\frac{\mathrm{d}^2}{\mathrm{d}\mathrm{t}^2}\mathbf{Y}(\cdot,\mathrm{t}) + \mathbf{Y}(\cdot,\mathrm{t}) = \mathbcal{F}(\cdot,\mathrm{t}) \quad \text{in} \ (0, \mathrm{T}), \\
    \mathbf{Y}(\mathrm{0}) = \frac{\mathrm{d}}{\mathrm{d}\mathrm{t}}\mathbf{Y}(\mathrm{0}) = 0,
    \end{cases}
\end{align}
where the term $\mathbcal{F}$ is defined as
\begin{equation}
    \mathbcal{F}(x,t) := \frac{\partial^2}{\partial t^2}u^\textbf{in}(\mathrm{x},\mathrm{t}) - \int_{\mathbf{\Gamma}}\frac{\overline{\bm{C}}}{4\pi\vert x-y\vert} \frac{\partial^2}{\partial t^2}{\mathrm{Y} (y, t-c_0^{-1}\vert x-y\vert)} \, dy.
\end{equation}
The incident wave field \(u^\textit{in}\) originates from a point source located at \(\mathrm{x}_0 \in \mathbb{R}^{3} \setminus \overline{\mathrm{D}}\) and is given by:
\[
u^\textit{in}(\mathrm{x}, \mathrm{t}, \mathrm{x}_0) := \rho_c\frac{\lambda(\mathrm{t} - \mathrm{c}_0^{-1} \vert \mathrm{x} - \mathrm{x}_0 \vert)}{\vert \mathrm{x} - \mathrm{x}_0 \vert},
\]
where \(\lambda \in \mathcal{C}^4(\mathbb{R})\) is a causal signal (i.e., it vanishes for \(\mathrm{t} < 0\)), and we assume \(u^\textit{in}\) belongs to the function space $\mathrm{H}^{5}_{0,\sigma}(0,\mathrm{T}; \mathrm{L}^2(\mathbf{\Gamma}))$, implying \(r=5\). Consequently, we have \(\partial_t^2u^\textit{in} \in \mathrm{H}^{3}_{0,\sigma}(0,\mathrm{T}; \mathrm{L}^2(\mathbf{\Gamma}))\).
\\
By Proposition \ref{prop1}, it follows that \(\mathbf{\mathrm{Y}} \in \mathrm{H}^{4}_{0,\sigma}(0,\mathrm{T}; \mathrm{L}^2(\mathbf{\Gamma}))\), which implies that \(\partial_t^2\mathbf{\mathrm{Y}} \in \mathrm{H}^{2}_{0,\sigma}(0,\mathrm{T}; \mathrm{L}^2(\mathbf{\Gamma}))\). Due to the smoothing properties of the single-layer operator, for \(\partial_t^2\mathbf{\mathrm{Y}} \in \mathrm{H}^{2}_{0,\sigma}(0,\mathrm{T}; \mathrm{L}^2(\mathbf{\Gamma}))\), we have \(\mathbcal{S}\big(\partial_t^2\mathbf{\mathrm{Y}}\big) \in \mathrm{H}^{2}_{0,\sigma}(0,\mathrm{T}; \mathrm{H}^1(\mathbf{\Gamma}))\), see for instance \cite{Q-S}.
\\
Given the smoothness of the incident wave field in the spatial domain, we further assume \(\partial_t^2u^\textit{in} \in \mathrm{H}^{2}_{0,\sigma}(0,\mathrm{T}; \mathrm{H}^1(\mathbf{\Gamma}))\), as $x_0$ is away from $\bf{\Gamma}$. Combining these observations, we deduce that \(\mathbcal{F} \in \mathrm{H}^{2}_{0,\sigma}(0,\mathrm{T}; \mathrm{H}^1(\mathbf{\Gamma}))\).
\\
Next, by applying the compact Sobolev embedding \(H^1(\mathbf{\Gamma}) \hookrightarrow L^p(\mathbf{\Gamma})\) for \(p \in (1, \infty)\), we further conclude that \(\mathbcal{F} \in \mathrm{H}^{2}_{0,\sigma}(0,\mathrm{T}; \mathrm{L}^p(\mathbf{\Gamma}))\) for \(p \in (1, \infty)\).
\\
Finally, considering that the single-layer operator in the Fourier-Laplace domain maps \(L^p(\mathbf{\Gamma})\) to \(\mathrm{W}^{1,p}(\mathbf{\Gamma})\) for \(p \in (1, \infty)\) (see \cite{mitrea}), and assuming additional smoothness of the incident wave, we conclude that
\begin{equation}\label{regularity-1}
    \mathbcal{F} \in \mathrm{H}^{2}_{0,\sigma}(0,\mathrm{T}; \mathrm{W}^{1,p}(\mathbf{\Gamma}))\; \text{for} \;p \in (1, \infty).
\end{equation}
Hence, due to the well-posedness of the non-homogeneous problem (\ref{ode}), we derive that
\begin{equation}
    \partial_t^2\mathbf{\mathrm{Y}} \in \mathrm{H}^{2}_{0,\sigma}(0,\mathrm{T}; \mathrm{W}^{1,p}(\mathbf{\Gamma}))\; \text{for}\ p \in (1, \infty),\; \text{i.e.}\; \mathbf{\mathrm{Y}} \in \mathrm{H}^{4}_{0,\sigma}(0,\mathrm{T}; \mathrm{W}^{1,p}(\mathbf{\Gamma}))\; \text{for}\ p \in (1, \infty),
\end{equation}
Which completes the first part of the proof.
\\
Applying the Sobolev embedding theorem \( H^4(0,T) \xhookrightarrow{} \mathbcal{C}^3(0,T) \), we deduce that \( \partial_t^3\mathbf{Y} \in \mathbcal{C}\big(0,T; \mathrm{W}^{1,p}(\mathbf{\Gamma})\big) \) for \( p \in (1, \infty) \). 
\\
Next, from the regularity condition in (\ref{regularity-1}), it follows that \( \mathbcal{F} \in H^{2}_{0,\sigma}(0,T; \mathrm{W}^{1,p}(\mathbf{\Gamma})) \) for \( p \in (1, \infty) \). Moreover, for \( p > 2 \), the continuous embedding \( \mathrm{W}^{1,p}(\mathbf{\Gamma}) \hookrightarrow L^\infty(\mathbf{\Gamma}) \) holds, recalling that \( \Gamma \) is \( \mathbcal{C}^2 \)-regular. This completes the proof. 
\end{proof}

\begin{corollary}\label{prop3}
    Consider the Lippmann-Schwinger equation (\ref{ef-eq-1}). Then, we have $\partial_x \mathbcal{F}_m \in \mathbcal{C}\big(0,\mathrm{T};\mathrm{L}^\infty(\mathbf{\Gamma})).$ 
\end{corollary}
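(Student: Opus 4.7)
The plan is to mirror the strategy of Corollary~\ref{prop2}, but to extract one additional order of spatial smoothness by invoking the smoothing property of the single-layer potential one more time. First, I would recast equation~(\ref{ef-eq-1}) componentwise as a second-order temporal ODE of the same form as~(\ref{ode}):
\begin{equation*}
\omega_M^2\,\rchi_{\mathbf{\Gamma}}\,\partial_t^2 \mathcal{F}_{m_l}(x,t) + \mathcal{F}_{m_l}(x,t) = \mathbcal{G}_l(x,t),
\end{equation*}
whose source $\mathbcal{G}_l$ collects $\partial_t^2 u^{\textit{in}}$ together with the retarded single-layer coupling present in~(\ref{ef-eq-1}). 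Using the identity $\sum_i \mathcal{F}_{m_i} = (\lfloor K(\cdot)+1\rfloor)\,\mathbf{Y}$ that was exploited in deriving~(\ref{effective-equation}), this coupling can be rewritten entirely in terms of the already-analyzed scalar $\mathbf{Y}$, yielding a right-hand side that is a retarded single-layer potential applied to $\partial_t^2 \mathbf{Y}$ plus the smooth incident data.

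Next, I would feed in the regularity provided by Corollary~\ref{prop2}, namely $\mathbf{Y}\in \mathrm{H}^{4}_{0,\sigma}(0,\mathrm{T};\mathrm{W}^{1,p}(\mathbf{\Gamma}))$ for every $p\in(1,\infty)$, so that $\partial_t^2\mathbf{Y}\in\mathrm{H}^{2}_{0,\sigma}(0,\mathrm{T};\mathrm{W}^{1,p}(\mathbf{\Gamma}))$. The key upgrade relative to Corollary~\ref{prop2} is the sharpened mapping property of the retarded single-layer operator: on the $\mathcal{C}^2$-regular surface $\mathbf{\Gamma}$, its Laplace symbol $\hat{\mathbcal{S}}_\omega$ maps $\mathrm{W}^{1,p}(\mathbf{\Gamma})\to \mathrm{W}^{2,p}(\mathbf{\Gamma})$ with polynomial $\omega$-growth, by the layer-potential estimates of Mitrea~\cite{mitrea}. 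Transporting these bounds to the time domain through Lubich's functional calculus (Lemma~\ref{lubich}) exactly as in Proposition~\ref{prop1}, and using that $u^{\textit{in}}$ is smooth on $\mathbf{\Gamma}$ since $x_0\notin\overline{\mathbf{\Omega}}$, I would conclude that $\mathbcal{G}_l\in \mathrm{H}^{2}_{0,\sigma}(0,\mathrm{T};\mathrm{W}^{2,p}(\mathbf{\Gamma}))$.

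The third step is to exploit well-posedness of the temporal ODE pointwise in $x$ with zero initial data: the solution inherits the spatial regularity of its source and gains two temporal derivatives, so $\mathcal{F}_{m_l}\in\mathrm{H}^{4}_{0,\sigma}(0,\mathrm{T};\mathrm{W}^{2,p}(\mathbf{\Gamma}))$. The temporal embedding $\mathrm{H}^4(0,\mathrm{T})\hookrightarrow \mathcal{C}^3(0,\mathrm{T})$ then yields $\mathcal{F}_{m_l}\in\mathcal{C}(0,\mathrm{T};\mathrm{W}^{2,p}(\mathbf{\Gamma}))$; choosing $p>2$ together with the Sobolev embedding $\mathrm{W}^{2,p}(\mathbf{\Gamma})\hookrightarrow \mathrm{W}^{1,\infty}(\mathbf{\Gamma})$ valid on the two-dimensional $\mathcal{C}^2$ surface delivers $\partial_x \mathbcal{F}_m\in\mathcal{C}(0,\mathrm{T};\mathrm{L}^\infty(\mathbf{\Gamma}))$.

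The main technical obstacle is the promotion of the single-layer mapping from $\mathrm{L}^p\to\mathrm{W}^{1,p}$ (as used in Corollary~\ref{prop2}) to $\mathrm{W}^{1,p}\to\mathrm{W}^{2,p}$ in a form compatible with Lemma~\ref{lubich}; this hinges on the $\mathcal{C}^2$-regularity of $\mathbf{\Gamma}$ together with the higher-order $L^p$ layer-potential estimates, and requires one to track the $\omega$-dependence carefully so that Lubich's polynomial-growth hypothesis~(\ref{1.1}) remains satisfied. Once this refined mapping is in hand, the remainder of the argument is structurally identical to Corollary~\ref{prop2}.
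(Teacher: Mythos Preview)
Your strategy is sound and reaches the same conclusion, but it is genuinely different from the paper's route. The paper does \emph{not} iterate the single-layer smoothing to reach $\mathrm{W}^{2,p}$ and then embed; instead it differentiates the source $\mathbb{F}$ directly in $x$, writes $\partial_{x_i}\mathbb{F}$ as the sum of a term carrying the kernel $|x-y|^{-2}$ against $\sum_l\mathcal{F}_{m_l}$, a term carrying $|x-y|^{-1}$ against $\sum_l\partial_t\mathcal{F}_{m_l}$ (from differentiating the retarded time), and the smooth incident contribution, and then bounds each piece in $L^\infty$ using the $\mathcal{C}(0,T;L^\infty(\mathbf{\Gamma}))$ control on $\sum_l\mathcal{F}_{m_l}$ and $\sum_l\partial_t\mathcal{F}_{m_l}$ already furnished by Corollary~\ref{prop2}. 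Well-posedness of the temporal ODE then transfers $\partial_{x_i}\mathbb{F}\in H^2_{0,\sigma}(0,T;L^\infty(\mathbf{\Gamma}))$ to $\partial_{x_i}\mathbcal{F}_m\in\mathcal{C}(0,T;L^\infty(\mathbf{\Gamma}))$.

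The trade-off: the paper's argument is more elementary in that it never invokes the $\mathrm{W}^{1,p}\to\mathrm{W}^{2,p}$ mapping of $\hat{\mathbcal{S}}_\omega$ (so it sidesteps precisely the obstacle you single out), but its pointwise estimate of the $|x-y|^{-2}$ contribution is written rather loosely --- that kernel is only borderline integrable on a two-dimensional surface, and a careful treatment really requires either the Calder\'on--Zygmund/principal-value structure of $\nabla\mathbcal{S}$ or some H\"older regularity of the density. Your abstract bootstrapping avoids this delicacy by packaging it inside the Mitrea mapping property, at the cost of having to verify that property (with the requisite polynomial $\omega$-growth) on a merely $\mathcal{C}^2$ surface. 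Either route closes once that respective technical point is handled.
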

\begin{proof}
    Similar to Corollary \ref{prop2}, we begin by reformulating the Lippmann-Schwinger equation (\ref{ef-eq-1}) as a second-order non-homogeneous ordinary differential equation
\begin{align}\label{ode-1}
    \begin{cases}
    \displaystyle
    \hbar\frac{\mathrm{d}^2}{\mathrm{d}\mathrm{t}^2}\mathbcal{F}_m(\cdot,\mathrm{t}) + \mathbcal{F}_m(\cdot,\mathrm{t}) = \mathbb{F}(\cdot,\mathrm{t}) \quad \text{in} \ (0, \mathrm{T}), \\
    \mathbcal{F}_m(\mathrm{0}) = \frac{\mathrm{d}}{\mathrm{d}\mathrm{t}}\mathbcal{F}_m(\mathrm{0}) = 0,
    \end{cases}
\end{align}
where the term $\mathbb{F}$ is defined as
\begin{equation}
    \mathbb{F}(x,t) := \frac{\partial^2}{\partial t^2}u^\textbf{in}(\mathrm{x},\mathrm{t}) - \int_{\mathbf{\Gamma}} 
         \begin{pmatrix}
               \frac{\overline{\bm{C}}}{4\pi|x-y|} \cdot \sum\limits_{l=1}^{\lfloor K(\cdot) + 1 \rfloor} \mathcal{F}_{m_l}(\mathrm{y}, \mathrm{t} - c_0^{-1} \vert x - y \vert) \\
               \vdots \\
               \frac{\overline{\bm{C}}}{4\pi|x-y|} \cdot \sum\limits_{l=1}^{\lfloor K(\cdot) + 1 \rfloor} \mathcal{F}_{m_l}(\mathrm{y}, \mathrm{t} - c_0^{-1} \vert x - y \vert)
        \end{pmatrix}.
\end{equation}
As previously discussed, \(\mathbcal{F}_m(\cdot, \cdot)\) can be expressed as the sum of its individual components, specifically \(\sum\limits_{\ell=1}^{\lfloor K(\cdot) + 1 \rfloor} \mathcal{F}_{m_\ell}(\cdot, \cdot)\). By recalling the definition of the averaged sum \(\bm{\mathrm{Y}} := \frac{1}{\lfloor K(\cdot) + 1 \rfloor} \sum\limits_{\ell=1}^{\lfloor K(\cdot) + 1 \rfloor} \mathcal{F}_{m_\ell}(x, t)\) and noting that \(\lfloor K(\cdot) + 1 \rfloor \in L^\infty(\bm{\Gamma})\), we observe that the regularity properties established for \(\bm{\mathrm{Y}}\) implies \(\sum\limits_{l=1}^{\lfloor K(\cdot) + 1 \rfloor} \partial_t \mathcal{F}_{m_l} \in \mathbcal C(0,\mathrm{T}; \mathrm{L}^\infty(\mathbf{\Gamma})) \) as well as $\mathbcal{F}_m \in \mathbcal C(0,\mathrm{T}; \mathrm{L}^\infty(\mathbf{\Gamma}))$. Now, upon taking the partial derivative with respect to \( x \), we obtain the following estimate:
\begin{align}
    &\nonumber\big|\partial_{x_i} \mathbb{F}(x,t)\big| 
    \\ &\nonumber\lesssim \int_{\mathbf{\Gamma}} \begin{pmatrix}
               \frac{\overline{\bm{C}}}{4\pi|x-y|^2} \cdot \Big|\sum\limits_{l=1}^{\lfloor K(\cdot) + 1 \rfloor} \mathcal{F}_{m_l}(\mathrm{y}, \mathrm{t} - c_0^{-1} \vert x - y \vert)\Big| \\
               \vdots \\
               \frac{\overline{\bm{C}}}{4\pi|x-y|^2} \cdot \Big|\sum\limits_{l=1}^{\lfloor K(\cdot) + 1 \rfloor} \mathcal{F}_{m_l}(\mathrm{y}, \mathrm{t} - c_0^{-1} \vert x - y \vert)\Big|
        \end{pmatrix}
        + \int_{\mathbf{\Gamma}} \begin{pmatrix}
               \frac{\overline{\bm{C}}}{4\pi|x-y|} \cdot \Big|\sum\limits_{l=1}^{\lfloor K(\cdot) + 1 \rfloor} \partial_t\mathcal{F}_{m_l}(\mathrm{y}, \mathrm{t} - c_0^{-1} \vert x - y \vert)\Big| \\
               \vdots \\
               \frac{\overline{\bm{C}}}{4\pi|x-y|} \cdot \Big|\sum\limits_{l=1}^{\lfloor K(\cdot) + 1 \rfloor} \partial_t\mathcal{F}_{m_l}(\mathrm{y}, \mathrm{t} - c_0^{-1} \vert x - y \vert)\Big|
        \end{pmatrix}
        + \partial_{x_i} \partial_t^2u^\textit{in} 
    \\ &\nonumber \lesssim \int_{\mathbf{\Gamma}} \begin{pmatrix}
               \frac{\overline{\bm{C}}}{4\pi|x-y|^2} \cdot \Big\Vert\sum\limits_{l=1}^{\lfloor K(\cdot) + 1 \rfloor} \mspace{-20mu}\mathcal{F}_{m_l}(\mathrm{y}, \mathrm{t} - c_0^{-1} \vert x - y \vert)\Big\Vert_{C(0,T;L^\infty(\mathbf{\Gamma}))} \\
               \vdots \\
               \frac{\overline{\bm{C}}}{4\pi|x-y|^2} \cdot \Big\Vert\sum\limits_{l=1}^{\lfloor K(\cdot) + 1 \rfloor}\mspace{-20mu} \mathcal{F}_{m_l}(\mathrm{y}, \mathrm{t} - c_0^{-1} \vert x - y \vert)\Big\Vert_{C(0,T;L^\infty(\mathbf{\Gamma}))}
        \end{pmatrix}
        \\ &\nonumber+ \begin{pmatrix} \displaystyle
               \Big(\int_{\mathbf{\Gamma}} \frac{1}{|x-y|^2}\Big)^\frac{1}{2} \cdot \Big\Vert\sum\limits_{l=1}^{\lfloor K(\cdot) + 1 \rfloor}\mspace{-20mu} \partial_t\mathcal{F}_{m_l}(\mathrm{y}, \mathrm{t} - c_0^{-1} \vert x - y \vert)\Big\Vert_{C(0,T;L^\infty(\mathbf{\Gamma}))} \\
               \vdots \\
               \displaystyle\Big(\int_{\mathbf{\Gamma}} \frac{1}{|x-y|^2}\Big)^\frac{1}{2} \cdot \Big\Vert\sum\limits_{l=1}^{\lfloor K(\cdot) + 1 \rfloor} \mspace{-20mu}\partial_t\mathcal{F}_{m_l}(\mathrm{y}, \mathrm{t} - c_0^{-1} \vert x - y \vert)\Big\Vert_{C(0,T;L^\infty(\mathbf{\Gamma}))}
        \end{pmatrix}
        + \mathcal{O}(1) = \mathcal{O}(1),
\end{align}
which implies that \( \partial_{x_i}\mathbf{F} \in H^2_{0,\sigma}\big(0,T;L^\infty(\mathbf{\Gamma})\big) \). By repeating these steps and following similar procedures in the context of the well-posedness of the non-homogeneous differential equation (\ref{ode}), we conclude that \( \partial_{x_i}\mathbcal{F}_m \in \mathbcal{C}\big(0,T;L^\infty(\mathbf{\Gamma})\big) \). This completes the proof.
\end{proof}     


    \subsection{The asymptotic approximations: Proof of Theorem \ref{mainth}} 

    \subsubsection{The generated effective medium} \label{gen-eff-med}   

\noindent
To begin, we recall Proposition \ref{main-prop}, where \(M = [d^{-2}]\), providing an asymptotic expansion for the wave field, given by
\begin{align} \label{asymptotic-expansion-us}
    u^\textit{sc}(x, t) = -\sum_{m=1}^{[d^{-2}]} \frac{\mathbf{C}_m}{4\pi |x - \mathbf{z}_m|} \mathrm{Y}_m\big(t - c_0^{-1} |x - \mathbf{z}_m|\big) + \mathcal{O}(M\varepsilon^{2}), \quad \text{as} \quad \varepsilon\ll 1.
\end{align}
Due to the spatial distribution of the bubbles as specified in Assumption \textcolor{blue}{1}, we reformulate this expression as follows:
\begin{align}\label{ef-es}
    u^\textit{sc}(x, t) 
    \nonumber&= -\sum_{j=1}^{[d^{-2}]} \frac{\mathbf{C}_{m_l}}{4\pi |x - \mathbf{z}_{m_1}|} \sum_{l=1}^{\lfloor K(\cdot) + 1\rfloor}\mathrm{Y}_{m_l}\big(t - c_0^{-1} |x - \mathbf{z}_{m_l}|\big)
    \\ \nonumber&- \sum_{m=1}^{[d^{-2}]} \sum_{l=1}^{\lfloor K(\cdot) + 1\rfloor}\frac{\mathbf{C}_{m_l}}{4\pi} \Big(\frac{1}{|x - \mathbf{z}_{m_l}|}-\frac{1}{|x - \mathbf{z}_{m_1}|}\Big)\mathrm{Y}_{m_l}\big(t - c_0^{-1} |x - \mathbf{z}_{m_l}|\big) + \mathcal{O}(M\varepsilon^{2})
    \\ \nonumber&= -\sum_{m=1}^{[d^{-2}]} \frac{\mathbf{C}_{m_l}}{4\pi |x - \mathbf{z}_{m_1}|} \sum_{l=1}^{\lfloor K(\cdot) + 1\rfloor}\mathcal{F}_{m_l}\big(z_{m_l},t - c_0^{-1} |x - \mathbf{z}_{m_l}|\big)
    \\ \nonumber&- \underbrace{\sum_{m=1}^{[d^{-2}]} \frac{\mathbf{C}_{m_l}}{4\pi |x - \mathbf{z}_{m_1}|}\sum_{l=1}^{\lfloor K(\cdot) + 1\rfloor}\Big(\mathrm{Y}_{m_l}\big(t - c_0^{-1} |x - z_{m_l}|\big)-\mathcal{F}_{m_l}\big(z_{m_l},t - c_0^{-1} |x - \mathbf{z}_{m_l}|\big)\Big)}_{:= \textit{Error}_{(1)}}
    \\ \nonumber&- \underbrace{\sum_{m=1}^{[d^{-2}]} \sum_{l=1}^{\lfloor K(\cdot) + 1\rfloor}\frac{\mathbf{C}_{m_l}}{4\pi} \Big(\frac{1}{|x - \mathbf{z}_{m_l}|}-\frac{1}{|x - \mathbf{z}_{m_1}|}\Big)\mathcal{F}_{m_l}\big(z_{m_l},t - c_0^{-1} |x - \mathbf{z}_{m_l}|\big)}_{:= \textit{Error}_{(2)}}
    \\ \nonumber&- \underbrace{\sum_{m=1}^{[d^{-2}]} \sum_{l=1}^{\lfloor K(\cdot) + 1\rfloor}\frac{\mathbf{C}_{m_l}}{4\pi} \Big(\frac{1}{|x - \mathbf{z}_{m_l}|}-\frac{1}{|x - \mathbf{z}_{m_1}|}\Big)\Bigg(\mathrm{Y}_{m_l}(t - c_0^{-1} |x - \mathbf{z}_{m_l}|)-\mathcal{F}_{m_l}\big(z_{m_l},t - c_0^{-1} |x - \mathbf{z}_{m_l}|\big)\Bigg)}_{:= \textit{Error}_{(3)}} 
    \\ &+ \mathcal{O}(M\varepsilon^{2}).
\end{align}
In light of the previous discussion on deriving equation (\ref{ef-eq-2}) from equation (\ref{ef-eq-1}), and observing that \(\bm{\mathbcal{F}}_j(\cdot.\cdot)\) can be expressed as the sum of its components, \(\sum\limits_{l=1}^{\lfloor K(\cdot) + 1\rfloor} \mathcal{F}_{m_l}(\cdot,\cdot)\), we represent the unknown term \(\sum\limits_{l=1}^{\lfloor K(\cdot) + 1\rfloor} \mathcal{F}_{m_l}\) as \(\bm{\mathbcal{F}}_m(\cdot,\cdot)\). A similar argument applies to \(\sum\limits_{l=1}^{\lfloor K(\cdot) + 1\rfloor} \mathrm{Y}_{m_l}\), which we can represent as \(\bm{\mathbb{Y}}_j\). Additionally, recalling that \(\mathbf{C}_{m_l} := \overline{\mathbf{C}}_{m_l} d^2\), we proceed with the following estimation:
\begin{enumerate}
    \item Estimation of $\textit{Error}_{(1)}.$
        \begin{align}\label{ef-es-1}
              |\textit{Error}_{(1)}| 
              &\nonumber\lesssim \Big|\sum_{m=1}^{[d^{-2}]} \frac{\mathbf{C}_{m_l}}{4\pi |x - \mathbf{z}_{m_1}|}\sum_{l=1}^{\lfloor K(\cdot) + 1\rfloor}\Big(\mathrm{Y}_{m_l}\big(t - c_0^{-1} |x - \mathbf{z}_{m_l}|\big)-\mathcal{F}_{m_l}\big(z_{m_l},t - c_0^{-1} |x - \mathbf{z}_{m_l}|\big)\Big)\Big|
              \\ \nonumber&\lesssim d^2 \Bigg(\sum_{m=1}^{[d^{-2}]}\sum_{l=1}^{\lfloor K(\cdot) + 1\rfloor}\frac{1}{|x - \mathbf{z}_{m_1}|^2}\Bigg)^\frac{1}{2} \ \Bigg(\sum_{m=1}^{[d^{-2}]}\big|\bm{\mathbcal{F}}_m - \bm{\mathbb{Y}}_m\big|^2\Bigg)^\frac{1}{2}
              \\ &\lesssim d\ |\ln(d)|^\frac{1}{2} \Bigg(\sum_{m=1}^{[d^{-2}]}\big|\bm{\mathbcal{F}}_m - \bm{\mathbb{Y}}_m\big|^2\Bigg)^\frac{1}{2}.
        \end{align}
    \item Estimation of $\textit{Error}_{(2)}.$ 
        \begin{align*}
              |\textit{Error}_{(2)}|
              &\nonumber\lesssim \Big|\sum_{m=1}^{[d^{-2}]} \sum_{l=1}^{\lfloor K(\cdot) + 1\rfloor}\frac{\mathbf{C}_{m_l}}{4\pi} \Big(\frac{1}{|x - \mathbf{z}_{m_l}|}-\frac{1}{|x - \mathbf{z}_{m_1}|}\Big)\mathcal{F}_{m_l}\big(z_{m_l},t - c_0^{-1} |x - \mathbf{z}_{m_l}|\big)\Big|
              \\ &\nonumber\lesssim d^2\ \Bigg(\sum_{m=1}^{[d^{-2}]}\sum_{l=1}^{\lfloor K(\cdot) + 1\rfloor}|x - \mathbf{z}_{m_1}|^2\Bigg)^\frac{1}{2} \ \Bigg(\sum_{m=1}^{[d^{-2}]}\big|\bm{\mathbcal{F}}_m \big|^2\Bigg)^\frac{1}{2}
              \\ &\lesssim d^2 \Bigg(\sum_{m=1}^{[d^{-2}]}\big|\bm{\mathbcal{F}}_m \big|^2\Bigg)^\frac{1}{2}.
        \end{align*}
        Then, based on the regularity result, we have $\bm{\mathbcal{F}}_m\in L^\infty\big(0,T;L^\infty(\Gamma_m)\big),$ we arrive at
        \begin{align}\label{ef-es-2}
            |\textit{Error}_{(2)}|
              \lesssim d\ \big\Vert \bm{\mathbcal{F}}_m\big\Vert_{L^\infty\big(0,T;L^\infty(\Gamma_m)\big)}.
        \end{align}
    \item Estimation of $\textit{Error}_{(3)}.$ 
        \begin{align}\label{ef-es-3}
              |\textit{Error}_{(3)}|
              &\nonumber\lesssim \Bigg|\sum_{m=1}^{[d^{-2}]} \sum_{l=1}^{\lfloor K(\cdot) + 1\rfloor}\frac{\mathbf{C}_{m_l}}{4\pi} \Big(\frac{1}{|x - \mathbf{z}_{m_l}|}-\frac{1}{|x - \mathbf{z}_{m_1}|}\Big)\Bigg(\mathrm{Y}_{m_l}(t - c_0^{-1} |x - \mathbf{z}_{m_l}|)-\mathcal{F}_{m_l}\big(z_{m_l},t - c_0^{-1} |x - \mathbf{z}_{m_l}|\big)\Bigg)\Bigg|
              \\ &\nonumber\lesssim d^2\ \Bigg(\sum_{m=1}^{[d^{-2}]}\sum_{l=1}^{\lfloor K(\cdot) + 1\rfloor}|x - \mathbf{z}_{m_1}|^2\Bigg)^\frac{1}{2} \ \Bigg(\sum_{m=1}^{[d^{-2}]}\big|\bm{\mathbcal{F}}_m- \bm{\mathbb{Y}}_m \big|^2\Bigg)^\frac{1}{2}
              \\ &\lesssim d^2\ \Bigg(\sum_{m=1}^{[d^{-2}]}\big|\bm{\mathbcal{F}}_m- \bm{\mathbb{Y}}_m\big|^2\Bigg)^\frac{1}{2}.
        \end{align}
\end{enumerate}
Consequently, combining the estimates (\ref{ef-es-1}), (\ref{ef-es-2}), (\ref{ef-es-3}) and plugging them in (\ref{ef-es}), we deduce that
    \begin{align}\label{th-1}
          u^\textit{sc}(x,t) 
          \nonumber&= -\sum_{m=1}^{[d^{-2}]} \frac{\mathbf{C}_{m_l}}{4\pi |x - \mathbf{z}_{j_1}|} \sum_{l=1}^{\lfloor K(\cdot) + 1\rfloor}\mathcal{F}_{m_l}\big(z_{m_l},t - c_0^{-1} |x - \mathbf{z}_{m_l}|\big) 
          + \mathcal{O}\Bigg(d\ |\ln(d)|^\frac{1}{2} \Bigg(\sum_{j=1}^{[d^{-2}]}\big|\bm{\mathbcal{F}}_m - \bm{\mathbb{Y}}_m\big|^2\Bigg)^\frac{1}{2}\Bigg)
          \\ &+ \mathcal{O}\Bigg(d^2\ \Bigg(\sum_{m=1}^{[d^{-2}]}\big|\bm{\mathbcal{F}}_m- \bm{\mathbb{Y}}_m\big|^2\Bigg)^\frac{1}{2}\Bigg)  
          + \mathcal{O}\Big(d\ \big\Vert \bm{\mathbcal{F}}_m\big\Vert_{L^\infty\big(0,T;L^\infty(\Gamma_m)\big)}\Big) + \mathcal{O}(M\varepsilon^{2}).
    \end{align}
Then, knowing the fact that \(\mathbf{Y} = \frac{\partial^2}{\partial t^2} \mathbf{U}\), from the equations (\ref{effective-equation})-(\ref{effective-equation-2})-(\ref{effective-equation-3}), we arrive at the following expression
     \begin{align}
          \mathbcal{W}^\textit{sc}(x,t) = -\int_{\mathbf{\Gamma}} \frac{\overline{\bm{C}}\ (\lfloor K(y) + 1 \rfloor)}{4\pi|x-y|} \bm{\mathrm{Y}}(y, t-c_0^{-1} \vert x-y \vert) dy
     \end{align}
Now, recalling the fact that
$$\bm{\Gamma}=  \Big(\bigcup_{m= 1}^{[d^{-2}]} \Gamma_m\Big)\cup \Big(\bm{\Gamma}\setminus\bigcup_{m=1}^{[d^{-2}]}\Gamma_m\Big),$$ we deduce
    \begin{align}\label{th-2}
        \mathbcal{W}^\textit{sc}(x,t) 
        = -\sum_{m=1}^{[d^{-2}]} \int_{\Gamma_m} \frac{\overline{\bm{C}}\ (\lfloor K(y) + 1 \rfloor)}{4\pi|x-y|} \bm{\mathrm{Y}}(y, t-c_0^{-1} \vert x-y \vert) dy -\int_{\bm{\Gamma}\setminus\bigcup\limits_{m=1}^{[d^{-2}]}\Gamma_m} \frac{\overline{\bm{C}}\ (\lfloor K(y) + 1 \rfloor)}{4\pi|x-y|} \bm{\mathrm{Y}}(y, t-c_0^{-1} \vert x-y \vert) dy
    \end{align}
Based on the estimate derived for the expression \( \textit{\textbf{Err}}^{(2)} \) (Equation \(\ref{ll1}\)) and the fact that \( \lfloor K(y) + 1 \rfloor \in L^\infty(\bm{\Gamma}) \), a similar reasoning allows us to deduce the following estimate:
\begin{align}
    \int_{\bm{\Gamma}\setminus\bigcup\limits_{m=1}^{[d^{-2}]}\Gamma_j} \frac{\overline{\bm{C}}\ (\lfloor K(y) + 1 \rfloor)}{4\pi|x-y|} \bm{\mathrm{Y}}(y, t-c_0^{-1} \vert x-y \vert) dy\lesssim \big\Vert\bm{\mathrm{Y}} \Vert_{L^\infty\big(0,\mathrm{T};\mathrm{L}^\infty(\Gamma_m)\big)}\ d.
\end{align}
Then, based on the above estimate, subtracting (\ref{th-1}) from (\ref{th-2}), we arrive at
\begin{align}
    \nonumber&u^\textit{sc}(x,t) - \mathbcal{W}^\textit{sc}(x,t) 
    \\ \nonumber&= \sum_{m=1}^{[d^{-2}]} \int_{\Gamma_m} \frac{\overline{\bm{C}}\ (\lfloor K(y) + 1 \rfloor)}{4\pi|x-y|} \bm{\mathrm{Y}}(y, t-c_0^{-1} \vert x-y \vert) dy 
    -\sum_{m=1}^{[d^{-2}]} \frac{\mathbf{C}_{m_l}}{4\pi |x - \mathbf{z}_{m_1}|} \sum_{l=1}^{\lfloor K(\cdot) + 1\rfloor}\mathcal{F}_{m_l}\big(z_{m_l},t - c_0^{-1} |x - \mathbf{z}_{m_l}|\big) 
    \\ \nonumber &+ \mathcal{O}\Bigg(d\ |\ln(d)|^\frac{1}{2} \Bigg(\sum_{m=1}^{[d^{-2}]}\big|\bm{\mathbcal{F}}_m - \bm{\mathbb{Y}}_m\big|^2\Bigg)^\frac{1}{2}\Bigg)
    + \mathcal{O}\Big(d\ \big\Vert \bm{\mathbcal{F}}_m\big\Vert_{L^\infty\big(0,T;L^\infty(\Gamma_m)\big)}\Big) 
    + \mathcal{O}\Bigg(d\ \big\Vert\bm{\mathrm{Y}} \Vert_{L^\infty\big(0,\mathrm{T};\mathrm{L}^\infty(\Gamma_m)\big)}\Bigg)
    \\ &+ \mathcal{O}\Bigg(d^2\ \Bigg(\sum_{m=1}^{[d^{-2}]}\big|\bm{\mathbcal{F}}_m- \bm{\mathbb{Y}}_m\big|^2\Bigg)^\frac{1}{2}\Bigg)+ \mathcal{O}(M\varepsilon^{2}).
\end{align}
Furthermore, recalling the definition \( \bm{\mathrm{Y}}(\cdot,\cdot) = \frac{1}{\lfloor K(\cdot) + 1 \rfloor} \sum\limits_{l=1}^{\lfloor K(\cdot) + 1 \rfloor} \mathcal{F}_{m_l}(\cdot, \cdot)\), using the fact that $\mathbcal{F}_m(\cdot,\cdot) = \sum\limits_{l=1}^{\lfloor K(\cdot) + 1 \rfloor} \mathcal{F}_{m_l}(\cdot, \cdot)$, and $\textit{Area}(\Gamma_m) = d^2$, we derive that
\begin{align}\label{mor}
    \nonumber&u^\textit{sc}(x,t) - \mathbcal{W}^\textit{sc}(x,t) 
    \\ \nonumber&= \sum_{m=1}^{[d^{-2}]}\int_{\Gamma_m} \frac{\overline{\bm{C}}}{4\pi}\Bigg( \frac{1}{|x-y|} \bm{\mathbcal{F}}_m(y, t-c_0^{-1} \vert x-y \vert)
    -\frac{1}{ |x - \mathbf{z}_{m}|} \bm{\mathbcal{F}}_m\big(z_{m},t - c_0^{-1} |x - \mathbf{z}_{m}|\big)\Bigg) dy
    \\ \nonumber &+ \mathcal{O}\Bigg(d\ |\ln(d)|^\frac{1}{2} \Bigg(\sum_{m=1}^{[d^{-2}]}\big|\bm{\mathbcal{F}}_m - \bm{\mathbb{Y}}_m\big|^2\Bigg)^\frac{1}{2}\Bigg)
    + \mathcal{O}\Big(d\ \big\Vert \bm{\mathbcal{F}}_m\big\Vert_{L^\infty\big(0,T;L^\infty(\Gamma_j)\big)}\Big) + \mathcal{O}\Bigg(d\ \big\Vert\bm{\mathrm{Y}} \Vert_{L^\infty\big(0,\mathrm{T};\mathrm{L}^\infty(\Gamma_j))}\Bigg)
    \\ &+ \mathcal{O}\Bigg(d^2\ \Bigg(\sum_{m=1}^{[d^{-2}]}\big|\bm{\mathbcal{F}}_m- \bm{\mathbb{Y}}_m\big|^2\Bigg)^\frac{1}{2}\Bigg)+ \mathcal{O}\Big(d\ |\log(d)|\ \big\Vert \bm{\mathbcal{F}}_m \Vert_{L^\infty\big(0,\mathrm{T};\mathrm{L}^\infty(\Gamma_m)\big)}\Big) + \mathcal{O}(M\varepsilon^{2}).
\end{align}
In order to estimate the first term of the right hand side of the above expression, we rewrite the integrand as follows:
    \begin{align}
          \nonumber&\frac{1}{|x-y|} \bm{\mathbcal{F}}_m(y, t-c_0^{-1} \vert x-y \vert)
    -\frac{1}{ |x - \mathbf{z}_{m}|} \bm{\mathbcal{F}}_m\big(z_m,t - c_0^{-1} |x - \mathbf{z}_{m}|\big)
          \\ \nonumber &= \bm{\mathbcal{F}}_m\big(z_m,t - c_0^{-1} |x - \mathbf{z}_m|\big) \Bigg(\frac{1}{|x-y|}-\frac{1}{|x - \mathbf{z}_m|}\Bigg) 
         + \frac{1}{|x-y|}\Big(\bm{\mathbcal{F}}_m(y, t-c_0^{-1} \vert x-y \vert)-\bm{\mathbcal{F}}_m\big(z_m,t - c_0^{-1} |x - \mathbf{z}_m|\big)\Big).
    \end{align}
Consequently, we do the following estimates:
\begin{enumerate}
    \item Estimation of $\textit{Error}^{(6)}.$ We have
        \begin{align}\label{error6}
            |\textit{Error}^{(6)}| 
            \nonumber&\lesssim \Bigg|\sum_{m=1}^{[d^{-2}]}\int_{\Gamma_m} \frac{\overline{\bm{C}}}{4\pi} \Bigg(\frac{1}{|x-y|}-\frac{1}{|x - \mathbf{z}_{m}|}\Bigg)\bm{\mathbcal{F}}_m\big(z_m,t - c_0^{-1} |x - \mathbf{z}_m|\big) \Bigg|
            \\ &\lesssim \Big|\sum_{m=1}^{[d^{-2}]} \frac{1}{d_{mj}^{2}} \int_{\Gamma_m} |y- z_{m}|\ dy\Big| \lesssim d\ |\ln(d)|\ \big\Vert \bm{\mathbcal{F}}_m \Vert_{L^\infty\big(0,\mathrm{T};\mathrm{L}^\infty(\Gamma_m)\big)}.
        \end{align}
    \item  Estimation of $\textit{Error}^{(7)}.$ First, We note that
        \begin{align*}
            \nonumber&\Big(\bm{\mathbcal{F}}_m(y, t-c_0^{-1} \vert x-y \vert)- \bm{\mathbcal{F}}_m\big(z_m,t - c_0^{-1} |x - \mathbf{z}_m|\big)\Big) 
            \\ &= (y-z_m)\;\partial_x\mathbcal{F}_m (z^*, t-c_0^{-1}|x-z_m|) + (y-z_m)\; \nabla_y|y-z^*|\;\partial_t\mathbcal{F}_m (y, t^*),
        \end{align*}
        where, $z^*\in \Omega_m$ and $t^*\in (t-c_0^{-1}|x-z_m|,t-c_0^{-1}|x-y|).$ Consequently, we derive that
        \begin{align}\label{error7}
             |\textit{Error}^{(7)}| 
            \nonumber&\lesssim \sum_{m=1}^{[d^{-2}]}\int_{\Gamma_m} \frac{\overline{\bm{C}}}{4\pi} \frac{1}{|x-y|}\Big(\bm{\mathbcal{F}}_m(y, t-c_0^{-1} \vert x-y \vert)- \bm{\mathbcal{F}}_m\big(z_m,t - c_0^{-1} |x - \mathbf{z}_m|\big)\Big)
            \\ \nonumber &\lesssim d\big\Vert \partial_x\bm{\mathbcal{F}}_m \Vert_{L^\infty\big(0,\mathrm{T};\mathrm{L}^\infty(\Gamma_m)\big)} 
            + d^3\ \sum_{m=1}^{[d^{-2}]} \frac{1}{d_{mj}} \big\Vert \partial_t\bm{\mathbcal{F}}_m \Vert_{L^\infty\big(0,\mathrm{T};\mathrm{L}^\infty(\Gamma_m)\big)}
            \\ &\lesssim d\big\Vert \partial_x\bm{\mathbcal{F}}_m\Vert_{L^\infty\big(0,\mathrm{T};\mathrm{L}^\infty(\Gamma_m)\big)} + d\ \big\Vert \partial_t\bm{\mathbcal{F}}_m \Vert_{L^\infty\big(0,\mathrm{T};\mathrm{L}^\infty(\Gamma_m)\big)}.
        \end{align}
\end{enumerate}
Then, having the estimates (\ref{error6}) and (\ref{error7}), we obtain
\begin{align}
    \nonumber&\sum_{m=1}^{[d^{-2}]}\int_{\Gamma_m} \frac{\overline{\bm{C}}}{4\pi}\Bigg(\frac{1}{|x-y|} \bm{\mathbcal{F}}_m(y, t-c_0^{-1} \vert x-y \vert)- \frac{1}{ |x - \mathbf{z}_{m_1}|} \bm{\mathbcal{F}}_m\big(z_m,t - c_0^{-1} |x - \mathbf{z}_m|\big) 
    \Bigg) dy 
    \\ &\lesssim d\ |\ln(d)|\ \big\Vert \bm{\mathbcal{F}}_m \Vert_{L^\infty\big(0,\mathrm{T};\mathrm{L}^\infty(\Gamma_m)\big)} 
    + d \big\Vert \partial_x\bm{\mathbcal{F}}_m \Vert_{L^\infty\big(0,\mathrm{T};\mathrm{L}^\infty(\Gamma_m)\big)} + d\ \big\Vert \partial_t\bm{\mathbcal{F}}_m\Vert_{L^\infty\big(0,\mathrm{T};\mathrm{L}^\infty(\Gamma_m)\big)}.
\end{align}
Based on the above estimate, we then finally obtain that
\begin{align}
    \nonumber&u^\textit{sc}(x,t) - \mathbcal{W}^\textit{sc}(x,t) 
    \\ \nonumber&= \mathcal{O}\Big(d\ |\ln(d)|\ \big\Vert \bm{\mathbcal{F}}_m \Vert_{L^\infty\big(0,\mathrm{T};\mathrm{L}^\infty(\Gamma_m)\big)}\Big) 
    + \Big(d\ \big\Vert \partial_x\bm{\mathbcal{F}}_m \Vert_{L^\infty\big(0,\mathrm{T};\mathrm{L}^\infty(\Gamma_m)\big)}\Big) 
    + \Big(d\ \big\Vert \partial_t\bm{\mathbcal{F}}_m \Vert_{L^\infty\big(0,\mathrm{T};\mathrm{L}^\infty(\Gamma_m)\big)}\Big)
    \\ \nonumber &+ \mathcal{O}\Bigg(d\ |\ln(d)|^\frac{1}{2} \Bigg(\sum_{m=1}^{[d^{-2}]}\big|\bm{\mathbcal{F}}_m - \bm{\mathbb{Y}}_m\big|^2\Bigg)^\frac{1}{2}\Bigg)
    + \mathcal{O}\Big(d\ \big\Vert \bm{\mathbcal{F}}_m\big\Vert_{L^\infty\big(0,T;L^\infty(\Gamma_m)\big)}\Big) 
    + \mathcal{O}\Bigg(d\ \big\Vert\bm{\mathrm{Y}} \Vert_{L^\infty\big(0,\mathrm{T};\mathrm{L}^\infty(\Gamma_m)\big)}\Bigg)
    \\ &+ \mathcal{O}\Bigg(d^2\ \Bigg(\sum_{m=1}^{[d^{-2}]}\big|\bm{\mathbcal{F}}_m- \bm{\mathbb{Y}}_m\big|^2\Bigg)^\frac{1}{2}\Bigg)
    + \mathcal{O}\Big(d\ |\log(d)|\ \big\Vert \bm{\mathbcal{F}}_m \Vert_{L^\infty\big(0,\mathrm{T};\mathrm{L}^\infty(\Gamma_m)\big)}\Big) + \mathcal{O}(M\varepsilon^{2}).
\end{align}
In order to complete the proof of the Theorem, we state the following proposition.

\begin{proposition}\label{mor-prop}    

We consider the algebric system and the corresponding general Lippmann-Schwinger equation described in (\ref{tran1}) and (\ref{surface-integral-equn}), respectively as follows:
\begin{align}\label{tran3}
    \begin{cases}
             \mathbb{A}\cdot\frac{\mathrm{d}^2}{\mathrm{d}\mathrm{t}^2}\bm{\mathbb{Y}}_m(\mathrm{t}) + \bm{\mathbb{Y}}_m(\mathrm{t}) + \sum\limits_{\substack{j=1 \\ j\neq m}}^{[d^{-2}]}\mathbb C_{mj} \cdot\frac{\mathrm{d}^2}{\mathrm{d}\mathrm{t}^2}\mathbb Y_j(\mathrm{t}-\mathrm{c}_0^{-1}|\mathrm{z}_{m}-\mathrm{z}_{j}|) = \frac{\mathrm{d}^2}{\mathrm{d}\mathrm{t}^2} \mathbcal H_m^\textit{in}\; \mbox{ in } (0, \mathrm{T}),
             \\ \bm{\mathbb{Y}}_m(\mathrm{0}) = \frac{\mathrm{d}}{\mathrm{d}\mathrm{t}}\bm{\mathbb{Y}}_m(\mathrm{0}) = 0, 
    \end{cases}
\end{align}
and for $(\mathrm{x},t) \in \mathbb{R}^3\times (0, T)$
\begin{align}
    \rchi_{\mathbf{\Gamma}} \, \mathbb{A} \cdot \frac{\partial^2}{\partial t^2} \mathbcal{F}_m (\mathrm{x}, \mathrm{t}) + \mathbcal{F}_m (\mathrm{x}, \mathrm{t}) + \int_{\mathbf{\Gamma}} \mathbcal{C}(x, y) \cdot \frac{\partial^2}{\partial t^2} \mathbcal{F}_m(x, t - c_0^{-1} \vert x - y \vert) \, dy = \frac{\partial^2}{\partial t^2} \mathbcal{F}_m^\textbf{in}(\mathrm{x}, \mathrm{t}).
\end{align}
Then, the following estimate holds
 \begin{align}
     \sum_{m=1}^{[d^{-2}]}\big|\bm{\mathbcal{F}}_m(z_m,t) - \bm{\mathbb{Y}}_m(t)\big|^2 \lesssim  d^2\sum_{m=1}^{[d^{-2}]}\big\Vert \frac{\partial^2}{\partial t^2} \mathbcal{F}_m \Vert_{L^\infty\big(0,\mathrm{T};\mathrm{L}^\infty(\Gamma_m)\big)}^2.
 \end{align}
 
\end{proposition}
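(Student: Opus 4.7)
The plan is to compare the two systems point-by-point by a quadrature/collocation argument. Evaluating the surface integral equation (\ref{surface-integral-equn}) at the collocation points $x=z_m$ (noting $\rchi_{\bm{\Gamma}}(z_m)=1$) and partitioning $\bm{\Gamma}=\bigcup_{j=1}^{[d^{-2}]}\Gamma_j$ (plus a thin boundary strip of area $\mathcal{O}(d)$), one gets
\begin{align*}
    \mathbb{A}\cdot \partial_t^2\mathbcal{F}_m(z_m,t) + \mathbcal{F}_m(z_m,t) + \sum_{j=1}^{[d^{-2}]}\int_{\Gamma_j} \mathbcal{C}(z_m,y)\cdot \partial_t^2\mathbcal{F}_m\big(y, t-c_0^{-1}|z_m-y|\big)\,dy = \partial_t^2\mathbcal{F}_m^{\textit{in}}(z_m,t).
\end{align*}
Subtracting (\ref{tran3}) from this identity, the error vector $E_m(t):=\mathbcal{F}_m(z_m,t)-\bm{\mathbb{Y}}_m(t)$ satisfies a retarded system of exactly the same structure as (\ref{tran3}) but with a forcing term $\mathcal{R}_m(t)$ composed of three pieces: (i) the self-interaction integral $\int_{\Gamma_m}\mathbcal{C}(z_m,y)\partial_t^2\mathbcal{F}_m(y,\cdot)\,dy$, which is absent from the discrete system; (ii) the quadrature mismatch between $\int_{\Gamma_j}\mathbcal{C}(z_m,y)\partial_t^2\mathbcal{F}_m(y,t-c_0^{-1}|z_m-y|)\,dy$ and its midpoint surrogate $\mathbb{C}_{mj}\partial_t^2\mathbcal{F}_m(z_j,t-c_0^{-1}|z_m-z_j|)$ for $j\ne m$ (recalling $\mathbf{C}_{j_i}=\overline{\mathbf{C}}\,\varepsilon\sim\overline{\mathbf{C}}\,d^2$); and (iii) the contribution over $\bm{\Gamma}\setminus\bigcup_j\Gamma_j$.

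To estimate (ii), I would Taylor-expand the integrand around $y=z_j$. Using $|\Gamma_j|=d^2$ together with the spatial regularity from Corollary \ref{prop3} and the time regularity from Corollary \ref{prop2}, the local oscillation of the integrand on $\Gamma_j$ is bounded by a multiple of $(|z_m-z_j|^{-2}+|z_m-z_j|^{-1})\|\partial_t^2\mathbcal{F}_m\|_{C(0,T;W^{1,\infty}(\Gamma_m))}$ for $j\neq m$, giving a per-cell error of order $d^3(|z_m-z_j|^{-2}+|z_m-z_j|^{-1})$. Summing via the Riemann-sum comparison
\begin{align*}
    \sum_{j\ne m} d^2\,|z_m-z_j|^{-p}\;\lesssim\; \int_{\bm{\Gamma}} |z_m-y|^{-p}\,dy,
\end{align*}
which is $\mathcal{O}(|\ln d|)$ for $p=2$ and $\mathcal{O}(1)$ for $p=1$, I obtain $|\mathcal{R}_m^{(ii)}(t)|\lesssim d\,|\ln d|\,\|\partial_t^2\mathbcal{F}_m\|_{L^\infty(0,T;L^\infty(\Gamma_m))}$ (the log is absorbed using the $W^{1,p}$ regularity with $p>2$). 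The self-interaction (i) is controlled by $\int_{\Gamma_m}|z_m-y|^{-1}\,dy=\mathcal{O}(d)$, and (iii) is $\mathcal{O}(d)$ as in the $\textit{Error}_{(2)}$ computation above; each of (i) and (iii) therefore contributes at most $d\,\|\partial_t^2\mathbcal{F}_m\|_{L^\infty(0,T;L^\infty(\Gamma_m))}$ to $|\mathcal{R}_m(t)|$.

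Taking $\ell^2$-norms over $m$ yields
\begin{align*}
    \sum_{m=1}^{[d^{-2}]} |\mathcal{R}_m(t)|^2 \lesssim d^2 \sum_{m=1}^{[d^{-2}]} \Big\|\tfrac{\partial^2}{\partial t^2}\mathbcal{F}_m\Big\|_{L^\infty(0,T;L^\infty(\Gamma_m))}^2.
\end{align*}
The final step is to transfer this bound to $E_m$ via stability of the linear retarded system satisfied by $(E_m)$. Since this system has the same operator as (\ref{tran3}), condition (\ref{condition}) ensures (by the diagonally-dominant Gershgorin-type argument of \cite[Section 2.4]{Arpan-Sini-JEEQ}) that it defines a uniform-in-$M$ isomorphism on the appropriate $\ell^2$-valued Sobolev space, so $\|E_m\|_{\ell^2}\lesssim \|\mathcal{R}_m\|_{\ell^2}$, which yields the proposition. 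The hardest step is the near-diagonal quadrature analysis: controlling the $|z_m-z_j|^{-2}$ terms requires the $W^{1,p}$ regularity of $\mathbcal{F}_m$ from Corollary \ref{prop2} to prevent a logarithmic loss that would spoil the $d$-power.
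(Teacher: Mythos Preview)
Your proposal is correct and follows essentially the same route as the paper's proof in Section~\ref{mor-sub}: the paper also evaluates (\ref{surface-integral-equn}) at $x=z_m$, splits $\bm{\Gamma}=\Gamma_m\cup\big(\bigcup_{j\ne m}\Gamma_j\big)\cup\big(\bm{\Gamma}\setminus\bigcup_j\Gamma_j\big)$, and produces exactly your three error contributions (labelled there $\textit{Err}^{(1)}$, $\textit{Err}^{(3)}$, $\textit{Err}^{(2)}$), estimates each by $\mathcal{O}(d)\,\Vert\partial_t^2\mathbcal{F}_m\Vert_{L^\infty(0,T;L^\infty(\Gamma_m))}$ via the same Taylor expansion and Counting Lemma~\ref{counting}, takes $\ell^2$-sums, and closes with the well-posedness result from \cite[Section~2.4]{Arpan-Sini-JEEQ}. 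The only cosmetic difference is that the paper handles the near-diagonal quadrature term $\textit{Err}^{(31)}$ by writing the sum as $d^3\sum_{j\ne m}d_{mj}^{-1}$ directly (rather than tracking a $d_{mj}^{-2}$ contribution and then absorbing a logarithm via $W^{1,p}$ regularity as you suggest), but the mechanism and outcome are the same.
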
         

\begin{proof}
    See Section \ref{mor-sub} for more details.
\end{proof}

\noindent
Then, based on the Proposition \ref{mor-prop}, Lemma \ref{prop1}, Corollary \ref{prop2} and considering the regime
$$ d \sim \varepsilon^\frac{1}{2},$$
we finally obtain that
\begin{align}
    \nonumber&u^\textit{sc}(x,t) - \mathbcal{W}^\textit{sc}(x,t) = \mathcal{O}(\varepsilon^\frac{1}{2}).
\end{align}
This completes the proof of Theorem \ref{mainth}. \qed


    \subsubsection{Proof of the Proposition \ref{mor-prop}}\label{mor-sub}  

\noindent
The following step involves discretizing the surface integral equation given by (\ref{surface-integral-equn}), and estimating the difference between this surface integral and the general algebraic system given by (\ref{tran1}). Let us first rewrite the expression (\ref{surface-integral-equn}), using the fact that
$$\bm{\Gamma}= \Gamma_m\cup \Big(\bigcup_{j\ne m}^{[d^{-2}]} \Gamma_j\Big)\cup \Big(\bm{\Gamma}\setminus\bigcup_{j=1}^{[d^{-2}]}\Gamma_j\Big),$$
    \begin{align}
         \rchi_{\mathbf{\Gamma}} \, \mathbb{A} \cdot \frac{\partial^2}{\partial t^2} \mathbcal{F}_m (x, \mathrm{t}) + \mathbcal{F}_m (x, \mathrm{t}) 
         \nonumber&+ \sum\limits_{\substack{j=1\\j\ne m}}^{[d^{-2}]}\int_{\Gamma_j} \mathbcal{C}(x, y) \cdot \frac{\partial^2}{\partial t^2} \mathbcal{F}_m(y, t - c_0^{-1} \vert x - y \vert) \, dy = \frac{\partial^2}{\partial t^2} \mathbcal{F}_m^\textbf{in}(x, \mathrm{t})
         \\ \nonumber&- \int_{\Gamma_m} \mathbcal{C}(x, y) \cdot \frac{\partial^2}{\partial t^2} \mathbcal{F}_m(y, t - c_0^{-1} \vert x - y \vert) \, dy
         \\ &- \int_{\bm{\Gamma}\setminus\bigcup_{j=1}^{[d^{-2}]}\Gamma_j} \mathbcal{C}(x, y) \cdot \frac{\partial^2}{\partial t^2} \mathbcal{F}_m(y, t - c_0^{-1} \vert x - y \vert) \, dy
    \end{align}
Next, we use the Taylor's series expansion considering $\Big(z_{m_i}\Big)_{i=1}^{\lfloor K(\cdot) + 1 \rfloor}\in \Gamma_m$ and $\Big(z_{j_l}\Big)_{l=1}^{\lfloor K(\cdot) + 1 \rfloor}\in \Gamma_j$ for $m\ne j$. We also use that fact that
\begin{align}
    \textit{Area}(\Gamma_j) = d^2,
\end{align}
we arrive at the following expression in a discretized form at $x=z_m:$
    \begin{align}\label{dis}
         \nonumber&\rchi_{\mathbf{\Gamma}} \, \mathbb{A} \cdot \frac{\partial^2}{\partial t^2} \mathbcal{F}_m (z_m, \mathrm{t}) 
         + \mathbcal{F}_m (z_m, \mathrm{t}) 
         + \sum\limits_{\substack{j=1\\j\ne m}}^{[d^{-2}]}\mathbb{C}_{mj}(z_m, z_j) \cdot \frac{\partial^2}{\partial t^2} \mathbcal{F}_m(z_m, t - c_0^{-1} \vert z_m - z_j \vert) \, dy 
         = \frac{\partial^2}{\partial t^2} \mathbcal{F}_m^\textbf{in}(x, \mathrm{t})
         \\ \nonumber&- \underbrace{\int_{\Gamma_m} \mathbcal{C}(z_m, y) \cdot \frac{\partial^2}{\partial t^2} \mathbcal{F}_m(y, t - c_0^{-1} \vert z_m - y \vert) \, dy}_{:= \textit{Err}^{(1)}}
         - \underbrace{\int_{\bm{\Gamma}\setminus\bigcup\limits_{j=1}^{[d^{-2}]}\Gamma_j} \mathbcal{C}(z_m, y) \cdot \frac{\partial^2}{\partial t^2} \mathbcal{F}_m(y, t - c_0^{-1} \vert z_m - y \vert) \, dy}_{:= \textit{Err}^{(2)}}
        \\  &+ \underbrace{\sum\limits_{\substack{j=1\\j\ne m}}^{[d^{-2}]}\int_{\Gamma_j} \Bigg(\mathbb{C}_{mj}(z_m,z_j)\cdot\frac{\partial^2}{\partial t^2} \mathbcal{F}_m(z_m, t - c_0^{-1} \vert z_m - z_j \vert)-\mathbcal{C}(z_m, y)\cdot \frac{\partial^2}{\partial t^2} \mathbcal{F}_m(y, t - c_0^{-1} \vert z_m - y \vert) \Bigg)\, dy}_{:= \textit{Err}^{(3)}}.
    \end{align}
We start with estimating the error terms. 
\begin{enumerate}
    \item Estimation of $\textit{Err}^{(1)}.$ First, by local co-ordinate change, we observe that the      image of the ball with radius $r$, $B(x,r), \textit{Im}\big(B(x,r)\big)$ contained in $\Gamma_m.$ Then, we have
        \begin{align}
            \nonumber&|\textit{Err}^{(1)}| 
            \lesssim \big\Vert \frac{\partial^2}{\partial t^2} \mathbcal{F}_m \Vert_{L^\infty\big(0,\mathrm{T};\mathrm{L}^\infty(\Gamma_m))} \int_{\Gamma_m} \big|\mathbcal{C}(z_m, y)\big|d\sigma_y
            \\ \nonumber&\lesssim \big\Vert \frac{\partial^2}{\partial t^2} \mathbcal{F}_m \Vert_{L^\infty\big(0,\mathrm{T};\mathrm{L}^\infty(\Gamma_m))} \int_{\Gamma_m}|\Phi^{(0)}(z_m,y)|d\sigma_y
            \\ \nonumber&\lesssim \big\Vert \frac{\partial^2}{\partial t^2} \mathbcal{F}_m \Vert_{L^\infty\big(0,\mathrm{T};\mathrm{L}^\infty(\Gamma_m))} \Bigg(\int_{{\textit{Im}\big(B(z_m,r)\big)}}\frac{1}{|z_m-y|}dy + \int_{\Gamma_m\setminus{\textit{Im}\big(B(z_m,r)\big)}}\frac{1}{|z_m-y|}d\sigma_y\Bigg)
            \\ \nonumber&\lesssim \big\Vert \frac{\partial^2}{\partial t^2} \mathbcal{F}_m \Vert_{L^\infty\big(0,\mathrm{T};\mathrm{L}^\infty(\Gamma_m))} \Big(2\pi r + \frac{1}{r}(d^2- \pi r^2)\Big),\; \text{since}\; \textit{Area}\Big(\Gamma_m\setminus \textit{Im}\big(B(z_m,r)\big)\Big) = d^2- \pi r^2
            \\ \nonumber&\lesssim \big\Vert \frac{\partial^2}{\partial t^2} \mathbcal{F}_m \Vert_{L^\infty\big(0,\mathrm{T};\mathrm{L}^\infty(\Gamma_m))} \Big(2\pi r + \frac{1}{r}   (d^2- \pi r^2)\Big). 
        \end{align}
        Now, as this expression has a critical point at $r_{\text{sol}} = (\frac{1}{\pi})^\frac{1}{2}\;d,$ we further conclude after taking the square and summing from $1$ to $[d^{-2}]$ the following
        \begin{align}\label{ef-med-1}
              \sum\limits_{\substack{m=1}}^{[d^{-2}]} \big|\textit{Err}^{(1)}\big|^2 
              &\nonumber\lesssim \big\Vert \frac{\partial^2}{\partial t^2} \mathbcal{F}_m \Vert_{L^\infty\big(0,\mathrm{T};\mathrm{L}^\infty(\Gamma_m))}^2 d^2 d^{-2}
              \\ &\lesssim d^2\sum\limits_{\substack{m=1}}^{[d^{-2}]}\big\Vert \frac{\partial^2}{\partial t^2} \mathbcal{F}_m \Vert_{L^\infty\big(0,\mathrm{T};\mathrm{L}^\infty(\mathbf{\Gamma}))}^2.
        \end{align}
    \item Estimation of $\textit{Err}^{(2)}.$ In order to estimate this term, we first split the           region of the integral, $\bm{\Gamma}\setminus\bigcup_{j=1}^{[d^{-2}]}\Gamma_j$ into two            sub-region for any fixed $j.$ Let us do that in the following way.
        \begin{enumerate}
            \item Let us denote $\aleph_{(1)}\subset\bm{\Gamma}\setminus\bigcup_{j=1}                        ^{[d^{-2}]}\Gamma_j$ in such a way that for any $z_j\in \aleph_{(1)},$ which locates       away from $y\in S_j,$ where $S_j$ is the largest ball containing $z_j.$ In that            case, the function $|z_i - y|^{-1}$ remains bounded in the vicinity of  $S_j$.             Consequently, we derive that $\textit{Area}\Big(\aleph_{(1)}\Big) =                         \textit{Area}\Big(\bm{\Gamma}\setminus\bigcup_{j=1}^{[d^{-2}]}\Gamma_j\Big) =               \mathcal{O}(d).$
            \item Let us now denote $\aleph_{(2)}\subset\bm{\Gamma}\setminus\bigcup_{j=1}                    ^{[d^{-2}]}\Gamma_j$ be such that $y\in S_j$ locates near one of the $\Gamma_j$'s          which touches the boundary $\partial\bm{\Gamma}$ of $\aleph_{(2)}.$
        \end{enumerate}

\begin{figure}
\begin{center}
\begin{tikzpicture}[scale=1.2]

\draw (0,0) ellipse (3cm and 1.5cm);
\node[scale=1] at (0,-1.8) {$\mathbf{\Gamma}$};

\draw[<-,red, thick] (1.6, 1.15) -- (2.5, 1.5);
\node[above, scale=0.5] at (2.5,1.5) {$\aleph_{(1)}\ (\text{The region above the blue line})$};

\draw[<-,red, thick] (-1.6, -1.15) -- (-2.5, -1.5);
\node[above, scale=0.5] at (-2.5,-1.75) {$\aleph_{(2)} (\text{The region below the blue line})$};

\draw[<-,red, thick] (0.1,1.2) -- (1, 1.8);
\node[above, scale=0.5] at (1,1.8) {$\mathrm{D}_i$};

\node[above, scale=.5] at (0,0.8) {$z_i$};
\node[above, scale=0.5] at (0,0.9) {$\bullet$};

\coordinate (B) at (0,1); 
\draw (B) circle (0.2);

\draw[dotted, blue, line width=1.2pt] (-3.5,0.5) -- (3.5,0.5);

\clip (0,0) ellipse (3cm and 1.5cm);

\foreach \x in {-3.5,-2.5,...,3.5} {
    \draw (\x,-2) -- (\x,2);
}
\def\xellip(#1){4*sqrt(1 - (#1/2)^2)}

\foreach \y in {-1.75,-1.25,...,1.75} {
    \draw ({\xellip(\y)},\y) -- ({-\xellip(\y)},\y);
}

\begin{scope}
    \clip (2.5,-1.5) rectangle (3.5,1.5);
    \foreach \i in {-1.5,-1.3,...,3.5} {
        \draw[black, thick] (\i,-0.8) -- ++(3.5,1.5);
    }
\end{scope}
\begin{scope}
    \clip (-2.9,-1.5) rectangle (3.5,-1.25);
    \foreach \i in {-2.9,-2.65,...,0} {
        \draw[black, thick] (\i,-1.7) -- ++(3.5,1);
    }
\end{scope}

\begin{scope}
    \clip (-3.9,-1.5) rectangle (-2.5,2.7);
    \foreach \i in {-3.9,-3.8,...,3.9} {
        \draw[black, thick] (\i,-.9) -- ++(3.5,4.9);
    }
\end{scope}

\begin{scope}
    \clip (-3,1) rectangle (5.5,2);
    \foreach \i in {-3,-2.8,...,2} {
        \draw[black, thick] (\i,1.25) -- ++(5.5,1.5);
    }
\end{scope}

\begin{scope}
    \clip (1.5,0.75) rectangle (5.5,1.5);
    \foreach \i in {-3,-2.8,...,2} {
        \draw[black, thick] (\i,0) -- ++(5.5,1.5);
    }
\end{scope}

\begin{scope}
    \clip (-6.5,0.75) rectangle (-1.5,1.5);
    \foreach \i in {-6.5,-6.3,...,2} {
        \draw[black, thick] (\i,0) -- ++(5.5,1.5);
    }
\end{scope}

\begin{scope}
    \clip (-6.5,-2) rectangle (-1.5,-.75);
    \foreach \i in {-6.5,-6.3,...,2} {
        \draw[black, thick] (\i,-.5) -- ++(5.5,-2);
    }
\end{scope}

\begin{scope}
    \clip (1.5,-.75) rectangle (5.5,-5);
    \foreach \i in {-3,-2.8,...,5} {
        \draw[black, thick] (\i,0) -- ++(5.5,-3);
    }
\end{scope}

\draw (0,0) ellipse (3cm and 1.5cm);

\end{tikzpicture}
\end{center}
\caption{A schematic illustration for the split of the region $\mathbf{\Gamma}\setminus\bigcup\limits_{j=1}^{[d^{-2}]}\Gamma_j.$ } \label{pic1}
\end{figure}    

    Therefore, we have
        \begin{align}
            |\textit{Err}^{(2)}| 
            &\nonumber=\Big|\int_{\bm{\Gamma}\setminus\bigcup_{j=1}^{[d^{-2}]}\Gamma_j} \mathbcal{C}(x, y) \cdot \frac{\partial^2}{\partial t^2} \mathbcal{F}_m(y, t - c_0^{-1} \vert x - y \vert) \, dy\Big|
            \\ &\le \Big|\int_{\aleph_{(1)}} \mathbcal{C}(x, y) \cdot \frac{\partial^2}{\partial t^2} \mathbcal{F}_m(y, t - c_0^{-1} \vert x - y \vert) \, dy\Big| + \Big|\int_{\aleph_{(2)}} \mathbcal{C}(x, y) \cdot \frac{\partial^2}{\partial t^2} \mathbcal{F}_m(y, t - c_0^{-1} \vert x - y \vert) \, dy\Big|
        \end{align} 
        Then, by utilizing Lemma \ref{counting}, that provides the counting criterion near $\partial\bm{\Gamma},$ we derive
        \begin{align}\label{ll1}
            |\textit{Err}^{(2)}| 
            \nonumber&\lesssim \big\Vert \frac{\partial^2}{\partial t^2} \mathbcal{F}_m \Vert_{L^\infty\big(0,\mathrm{T};\mathrm{L}^\infty(\Gamma_j))}\ \textit{Area}(\Gamma_j)\sum\limits_{\substack{j=1\\j\ne m}}^{[d^{-1}]} |\Phi^{(0)}(z_j,y)| +  \big\Vert \frac{\partial^2}{\partial t^2} \mathbcal{F}_m \Vert_{L^\infty\big(0,\mathrm{T};\mathrm{L}^\infty(\Gamma_j))}\ \textit{Area}(\aleph^{(1)})
            \\ \nonumber &\lesssim \Big(d^2\sum\limits_{\substack{j=1}}^{[d^{-1}]}\frac{1}{d_{ij}} + d\Big)\big\Vert \frac{\partial^2}{\partial t^2} \mathbcal{F}_m \Vert_{L^\infty\big(0,\mathrm{T};\mathrm{L}^\infty(\Gamma_j))}
            \\ \nonumber &\lesssim \Big(d^2 \sum\limits_{\substack{n=1}}^{[d^{-1}]}\big[(2n+1)-(2n-1)\big]\frac{1}{n(d-\frac{d^2}{2})} 
            + d\Big)\big\Vert \frac{\partial^2}{\partial t^2} \mathbcal{F}_m \Vert_{L^\infty\big(0,\mathrm{T};\mathrm{L}^\infty(\Gamma_j))}
            \\  &\lesssim \Big(d^2\cdot d^{-1}|\log(d)| + d\Big)\big\Vert \frac{\partial^2}{\partial t^2} \mathbcal{F}_m \Vert_{L^\infty\big(0,\mathrm{T};\mathrm{L}^\infty(\Gamma_j))} \lesssim d\ \big\Vert \frac{\partial^2}{\partial t^2} \mathbcal{F}_m \Vert_{L^\infty\big(0,\mathrm{T};\mathrm{L}^\infty(\Gamma_j)\big)}.
        \end{align}
        Finally, we conclude after taking the square and summing from $1$ to $[d^{-2}]$ the following
        \begin{align}\label{ef-med-2}
              \sum\limits_{\substack{m=1}}^{[d^{-2}]} \big|\textit{Err}^{(2)}\big|^2 \lesssim
              d^2\sum\limits_{\substack{m=1}}^{[d^{-2}]}\big\Vert \frac{\partial^2}{\partial t^2} \mathbcal{F}_m \Vert_{L^\infty\big(0,\mathrm{T};\mathrm{L}^\infty(\Gamma_m))}^2
        \end{align}      
    \item Estimation of $\textit{Err}^{(3)}.$ This term can be rewritten as follows
    \begin{align}
        \textit{Err}^{(3)} 
        \nonumber&:= \underbrace{\sum\limits_{\substack{j=1\\j\ne m}}^{[d^{-2}]}\int_{\Gamma_j} \Big(\mathbb{C}_{mj}(z_m,z_j)-\mathbcal{C}(z_m, y)\Big) \cdot \frac{\partial^2}{\partial t^2} \mathbcal{F}_m(y, t - c_0^{-1} \vert z_m - y \vert) \, dy}_{:= \textit{Err}^{(31)}}
        \\ &+ \underbrace{\sum\limits_{\substack{j=1\\j\ne m}}^{[d^{-2}]}\int_{\Gamma_j} \mathbb{C}_{mj}(z_m,z_j)\cdot\Big(\frac{\partial^2}{\partial t^2} \mathbcal{F}_m(z_m, t - c_0^{-1} \vert z_m - z_j \vert)- \frac{\partial^2}{\partial t^2} \mathbcal{F}_m(y, t - c_0^{-1} \vert z_m - y \vert)\Big)dy}_{:= \textit{Err}^{(32)}}
    \end{align}
        \begin{align}
    \nonumber&\textit{Err}^{(31)} 
    \\ \nonumber&:= \sum\limits_{\substack{j=1\\j\ne m}}^{[d^{-2}]}\int_{\Gamma_j} \Big(\mathbb{C}_{mj}(z_m,z_j)-\mathbcal{C}(z_m, y)\Big) \cdot \frac{\partial^2}{\partial t^2} \mathbcal{F}_m(y, t - c_0^{-1} \vert z_m - y \vert) \, dy
    \\ &\lesssim \sum\limits_{\substack{j=1\\j\ne m}}^{[d^{-2}]}\int_{\Gamma_j} 
    \begin{pmatrix}
        \rchi_{m_1,j_1} & \rchi_{m_1,j_2} & \ldots & \rchi_{m_1,j_{\lfloor K(\cdot) + 1 \rfloor}}\\
        \vdots & \vdots & \ddots & \vdots \\
        \rchi_{m_{\lfloor K(\cdot) + 1} \rfloor,j_{\lfloor K(\cdot) + 1 \rfloor}} & \rchi_{m_{\lfloor K(\cdot) + 1},j_{\lfloor K(\cdot) + 1}} & \ldots & \rchi_{m_{\lfloor K(\cdot) + 1},j_{\lfloor K(\cdot) + 1 \rfloor}}
    \end{pmatrix}\cdot \frac{\partial^2}{\partial t^2} \mathbcal{F}_m(y, t - c_0^{-1} \vert z_m - y \vert) \, dy,
\end{align}
where, we define
\begin{align*}
    \rchi_{m_i,j_l} = \Phi^{(0)}(z_{m_i},y) - \Phi^{(0)}(z_{m_i},z_{j_l}),\; \textit{for}\; 1\le i,l\le \lfloor K(\cdot) + 1 \rfloor.
\end{align*}
Consequently, we arrive at the following expression
\begin{align}\label{er31}
    \nonumber&\big| \textit{Err}^{(31)} \big| 
    \\ \nonumber&\lesssim \big\Vert \frac{\partial^2}{\partial t^2} \mathbcal{F}_m \Vert_{L^\infty\big(0,\mathrm{T};\mathrm{L}^\infty(\mathbf{\Gamma}))}\sum\limits_{\substack{j=1\\j\ne m}}^{[d^{-2}]}\int_{\Gamma_j}\Bigg(\sum\limits_{i=1}^{\lfloor K(\cdot) + 1\rfloor}\sum\limits_{l=1}^{\lfloor K(\cdot) + 1\rfloor}\big|\Phi^{(0)}(z_{m_i},y) - \Phi^{(0)}(z_{m_i},z_{j_l})\big|^2\Bigg)^\frac{1}{2}dy
    \\ \nonumber&\lesssim \big\Vert \frac{\partial^2}{\partial t^2} \mathbcal{F}_m \Vert_{L^\infty\big(0,\mathrm{T};\mathrm{L}^\infty(\mathbf{\Gamma}))}\sum\limits_{\substack{j=1\\j\ne m}}^{[d^{-2}]}\int_{\Gamma_j}\Bigg(\sum\limits_{i=1}^{\lfloor K(\cdot) + 1\rfloor}\sum\limits_{l=1}^{\lfloor K(\cdot) + 1\rfloor}\Big|\int_0^1\nabla\Phi^{(0)}(z_{m_i}, z_{j_l}+\theta(y-z_{j_l}))\cdot (y-z_{j_l})d\theta\Big|^2\Bigg)^\frac{1}{2}dy
    \\ &\lesssim \big\Vert \frac{\partial^2}{\partial t^2} \mathbcal{F}_m \Vert_{L^\infty\big(0,\mathrm{T};\mathrm{L}^\infty(\mathbf{\Gamma}))}\ d^3 \sum\limits_{\substack{j=1\\j\ne m}}^{[d^{-2}]} d_{mj}^{-1} 
    \lesssim d\ \big\Vert \frac{\partial^2}{\partial t^2} \mathbcal{F}_m \Vert_{L^\infty\big(0,\mathrm{T};\mathrm{L}^\infty(\mathbf{\Gamma}))}.
\end{align}
Next, following a similar approach as used to estimate the terms \(\textit{Error}^{(6)}(\ref{error6})\) and \(\textit{Error}^{(7)}(\ref{error7})\), we can deduce that  
\begin{align}\label{er32}
    \big| \textit{Err}^{(32)} \big| \lesssim d\ \big\Vert \frac{\partial^2}{\partial t^2} \mathbcal{F}_m \Vert_{L^\infty\big(0,\mathrm{T};\mathrm{L}^\infty(\mathbf{\Gamma}))}.
\end{align}
Subsequently, using Counting Lemma \ref{counting}, we deduce the following estimate by squaring the estimates (\ref{er31}) and (\ref{er32}) and summing over the range from \( 1 \) to \( [d^{-2}] \)
\begin{align}\label{ef-med-3}
    \sum\limits_{m=1}^{[d^{-2}]} \big|\textit{Err}^{(3)}\big|^2 \lesssim d^2\sum\limits_{\substack{m=1}}^{[d^{-2}]}\big\Vert \frac{\partial^2}{\partial t^2} \mathbcal{F}_m \Vert_{L^\infty\big(0,\mathrm{T};\mathrm{L}^\infty(\Gamma_m))}^2.
\end{align}
\end{enumerate}
Subsequently, based on the estimates (\ref{ef-med-1}), (\ref{ef-med-2}) and (\ref{ef-med-3}), we derive that
    \begin{align}\label{f-es}
        \sum\limits_{\substack{m=1}}^{[d^{-2}]} \Big(\big|\textit{Err}^{(3)}\big|^2 + \big|\textit{Err}^{(3)}\big|^2 + \big|\textit{Err}^{(3)}\big|^2\Big)\lesssim 
        d^2\sum\limits_{\substack{m=1}}^{[d^{-2}]}\big\Vert \frac{\partial^2}{\partial t^2} \mathbcal{F}_m \Vert_{L^\infty\big(0,\mathrm{T};\mathrm{L}^\infty(\Gamma_m))}^2.
    \end{align}
Consequently, by comparing the discretized expression (\ref{dis}) and the general algebric system (\ref{tran1}), we arrive at the following system with $\mathbcal{Z}_m(z_m,t) := \bm{\mathrm{Y}}_m(t)-\mathbcal{F}_m(z_m,t)$
    \begin{align} \label{matrix2}
        \begin{cases}
             \mathcal{A}\frac{\mathrm{d}^2}{\mathrm{d}\mathrm{t}^2}\bm{\mathbcal{Z}}_m(z_m,\mathrm{t}) + \bm{\mathbcal{Z}}_m(z_m,\mathrm{t}) =  \mathcal{O}\Big(d
             \ \big\Vert \frac{\partial^2}{\partial t^2} \mathbcal{F}_m \Vert_{L^\infty\big(0,\mathrm{T};\mathrm{L}^\infty(\Gamma_m))}\Big) \mbox{ in } (0, \mathrm{T}),
             \\ \bm{\mathbcal{Z}}_m(x,\mathrm{0}) = \frac{\mathrm{d}}{\mathrm{d}\mathrm{t}}\bm{\mathbcal{Z}}_m(x,\mathrm{0}) = 0,   
        \end{cases}
    \end{align}
where, we define the operator $\mathcal{A}: (\mathrm{L}_r^2)^M\to (\mathrm{L}_r^2)^M$ as
\begin{align}
    \mathcal{A} = \mathcal{A}(t):=
     \begin{pmatrix}
        \omega_M^2 & \dots  & \frac{\mathbf{C}_{j_{\lfloor K(\cdot) + 1 \rfloor}}}{4\pi |z_{m_1} - z_{j_{\lfloor K(\cdot) + 1 \rfloor}|}}\mathcal{T}_{-\mathrm{c}_0^{-1}|z_{m_1} - z_{j_{\lfloor K(\cdot) + 1 \rfloor}|}}\\
       \vdots & \ddots & \vdots\\
        \frac{\mathbf{C}_{j_1}}{4\pi |z_{m_{\lfloor K(\cdot) + 1 \rfloor}}-z_{j_1}|} \mathcal{T}_{-\mathrm{c}_0^{-1}|z_{m_{\lfloor K(\cdot) + 1 \rfloor}}-z_{j_1}|}& \dots  & \omega_M^2
    \end{pmatrix}, 
\end{align}
with $\mathrm{L}_\ell^2:= \{ f \in \mathrm{L}^2(-r,\mathrm{T}): f=0\; \text{in}\ (-r,0)\}$, the translation operators $\mathcal{T}_{-\mathrm{c}_0^{-1}|\mathrm{z}_i-\mathrm{z}_j|}$, i.e. $\mathcal{T}_{-\mathrm{c}_0^{-1}|\mathrm{z}_i-\mathrm{z}_j|}(f)(t):=f(t-\mathrm{c}_0^{-1}|\mathrm{z}_i-\mathrm{z}_j|)$, and $\ell:=\max_{i\neq j}{\mathrm{c}^{-1}_0\vert z_i- z_j\vert}$.
\newline
Hence, we use the well-possedness of the problem (\ref{matrix2}) as discussed in \cite[Section 2.4]{Arpan-Sini-JEEQ} and the estimate (\ref{f-es}) to obtain
\begin{align}\label{mainesti}
    \sum_{m=1}^{[d^{-2}]}|\mathbcal{F}_m(z_m,t)-\bm{\mathbb{Y}}_m(t)|^2 = \mathcal{O}\Big(d^2\sum_{m=1}^{[d^{-2}]}\big\Vert \frac{\partial^2}{\partial t^2} \mathbcal{F}_m \Vert_{L^\infty\big(0,\mathrm{T};\mathrm{L}^\infty(\Gamma_m)\big)}^2\Big),\; \text{as}\; d\ll 1.
\end{align}
It completes the proof. \qed


   \section{Appendix: Counting Lemmas} 

\noindent
In this section, we focus on key results that are instrumental in the proofs of our main theorems, specifically for calculating the sums of inverse distances between inclusions located within the region \(\bm{\Gamma}\) and near its boundary, \(\partial\bm{\Gamma}\).
\begin{lemma}\cite{habib-sini} \label{counting}
    \textit{Counting Lemma.} For an arbitrary distribution of points \( z_j \) (\( j=1, \dots, M \)) within a bounded domain in \(\mathbb{R}^2\), where the minimum distance between points is given by \( d \), the following estimates hold uniformly with respect to each point \( z_i \):
    \begin{align}
        \sum_{\substack{j=1 \\ j\neq i}}^{M} \frac{1}{|z_i - z_j|^k} = 
        \begin{cases}
            \mathcal{O}(d^{-2}), & \text{if}\; k < 2, \\
            \mathcal{O}\left(d^{-2}(1 + |\log(d)|)\right), & \text{if}\; k = 2, \\
            \mathcal{O}(d^{-k}), & \text{if}\; k > 2.
        \end{cases}
    \end{align}
\end{lemma}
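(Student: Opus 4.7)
The plan is to use a standard packing/annulus decomposition: the minimum-distance hypothesis forces the points $z_j$ to sit inside pairwise disjoint open disks of radius $d/2$, so counting points in a region amounts to estimating its area divided by $d^2$. The uniformity in $i$ will come automatically since all bounds depend only on $d$ and on the diameter of the ambient bounded domain, not on the location of $z_i$.

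Fix an index $i$ and denote $R$ the diameter of the bounded domain containing the points, so $|z_i - z_j| \leq R$ for all $j$. I would partition the annular neighborhoods of $z_i$ into shells
\begin{equation*}
A_n := \{\, x \in \mathbb{R}^2 : nd \leq |x - z_i| < (n+1)d\,\}, \qquad n = 1, 2, \ldots, N,
\end{equation*}
with $N \sim R/d$. For $j \neq i$ one has $|z_i - z_j| \geq d$, so every $z_j$ lies in some $A_n$ with $n \geq 1$. Since the open disks $B(z_j, d/2)$ are pairwise disjoint and (after a slight thickening) contained in the enlarged annulus $\widetilde{A}_n := \{(n-\tfrac12)d \leq |x-z_i| \leq (n+\tfrac32)d\}$, a comparison of areas gives
\begin{equation*}
\#\{\,j : z_j \in A_n\,\} \;\leq\; \frac{\mathrm{Area}(\widetilde{A}_n)}{\pi d^2/4} \;\lesssim\; \frac{d^2(2n+1)}{d^2} \;\lesssim\; n,
\end{equation*}
uniformly in $i$ and $n$.

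Plugging this into the sum of inverse distances yields
\begin{equation*}
\sum_{j \neq i} \frac{1}{|z_i - z_j|^k} \;\leq\; \sum_{n=1}^{N} \frac{\#\{j : z_j \in A_n\}}{(nd)^k} \;\lesssim\; d^{-k} \sum_{n=1}^{N} n^{1-k}.
\end{equation*}
The three regimes now follow from elementary estimates on the tail sum $\sum_{n=1}^{N} n^{1-k}$: for $k < 2$ the sum is bounded by $N^{2-k} \sim (R/d)^{2-k}$, producing $d^{-k} \cdot d^{k-2} = \mathcal{O}(d^{-2})$; for $k = 2$ the sum is the harmonic partial sum $\lesssim 1 + \log N \lesssim 1 + |\log d|$, giving $\mathcal{O}(d^{-2}(1+|\log d|))$; and for $k > 2$ the series converges absolutely, yielding $\mathcal{O}(d^{-k})$.

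The only subtlety is a cosmetic one: ensuring the disjoint disks $B(z_j, d/2)$ used for the area comparison are fully contained in a slightly enlarged shell $\widetilde{A}_n$, which costs only a harmless geometric constant and does not interact with $i$. Hence there is no real obstacle here; the proof is essentially a direct consequence of the packing lemma and the geometric series, with the boundedness of the domain entering only to cut off $n$ at $N \sim R/d$.
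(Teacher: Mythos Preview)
Your proof is correct and follows the standard packing/annular-layer argument. The paper does not give its own proof of this lemma---it cites \cite{habib-sini} for it---but the two remarks immediately following the statement sketch exactly the same layer-counting idea (concentric square layers around $z_i$, with $\mathcal{O}(n)$ points in the $n$-th layer at distance $\sim nd$), so your approach is essentially the one the paper has in mind.
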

\noindent
The implications of these asymptotic estimates are crucial for controlling the interactions between inclusions as they relate to the minimum distance \( d \) in the domain. Based on the above lemma, we make the following remarks:
\begin{remark}
    We consider \(\bm{\Gamma}\) to have a sufficiently smooth boundary, \(\partial\bm{\Gamma}\), and assume \(\bm{\Gamma}\) has unit surface area. We aim to estimate the total area of squares \(\Gamma_j\) that intersect with \(\partial\bm{\Gamma}\). First, note that the maximum area of each square \(\Gamma_j\) is on the order of \(d^2\), and the radius of each \(\Gamma_j\) is on the order of \(d\). Consequently, the length of each curve segment where \(\Gamma_j\) intersects \(\partial\bm{\Gamma}\) is on the order of \(d\). Given that the total length of \(\partial\bm{\Gamma}\) is of order 1, the number of such intersecting squares \(\Gamma_j\) cannot exceed an order of \(d^{-1}\). Therefore, the total area of the intersecting squares is bounded above by \(d^2 \cdot d^{-1} = d\), ensuring that the area of this intersecting set remains within an order of \(d\).
\end{remark}
\begin{remark}
    In this remark, we outline a method to estimate the number of inclusions near the boundary \(\partial\bm{\Gamma}\). Let \( z_j \) be a fixed arbitrary point representing the location of one of the inclusions, and assume that \( z_j \) is close to a square \(\Gamma_j\) that intersects the boundary \(\partial\bm{\Gamma}\). To count the number of inclusions in the region \(\bm{\Gamma} \setminus \cup_{j=1}^M \Gamma_j\), we first approximate any small segment of \(\partial\bm{\Gamma}\) as flat. We then partition the region in \(\bm{\Gamma} \setminus \cup_{j=1}^M \Gamma_j\), near this approximate flat segment, into concentric layers formed by squares. Given that the total length of \(\partial\bm{\Gamma}\) is of order 1, the number of intersecting squares \(\Gamma_j\) along this boundary cannot exceed an order of \(d^{-1}\). Consequently, we have a maximum of \((2n + 1)\) small squares for each \( n = 0, 1, \ldots, [d^{-1}]\) intersecting with \(\partial\bm{\Gamma}\). For each \(n^{\textit{th}}\) layer, the number of inclusions within the layer is bounded by the order of \(\big[(2n + 1) - (2n - 1)\big]\), while their minimum distance from \(\Gamma_j\) is approximately \(n(d - \frac{d^2}{2})\).
\end{remark}

\end{document}